\documentclass[10pt,smallextended]{article}
\usepackage{a4wide}
\usepackage[latin1]{inputenc}
\usepackage[english]{babel}
\usepackage{geometry}
\usepackage{amsmath}
\usepackage{amssymb}
\usepackage{amsthm}
\usepackage{oldgerm}
\usepackage{amscd}
\usepackage{rotating}
\usepackage[all]{xy}
\usepackage{hyperref}
\usepackage{color}

\title{The ring of polynomials integral-valued over a finite set of integral elements}

\date{\today}

\author{Giulio Peruginelli\footnote{Department of Mathematics, University of Padova, Via Trieste, 63
35121 Padova, Italy. E-mail: gperugin@math.unipd.it}}

\numberwithin{equation}{section}

\newtheorem{Th}{Theorem}[section]
\newtheorem{Prop}[Th]{Proposition}
\newtheorem{Lemma}[Th]{Lemma}
\newtheorem{Cor}[Th]{Corollary}

\theoremstyle{definition}\newtheorem{Def}[Th]{Definition}
\newtheorem{Ex}[Th]{Example}
\newtheorem{Rem}[Th]{Remark}

\newcommand{\Q}{\mathbb{Q}}
\newcommand{\N}{\mathbb{N}}
\newcommand{\Z}{\mathbb{Z}}
\newcommand{\Int}{\mathrm{Int}}
\newcommand{\olD}{\overline{D}}
\newcommand{\olK}{\overline{K}}

\hyphenation{ge-ne-rated}
\hyphenation{di-vi-si-ble}

\begin{document}

\leftmark{\noindent  J. Commut. Algebra 8 (2016), no. 1,  113-141.\;\;\footnotesize{\href{http://dx.doi.org/10.1216/JCA-2016-8-1-113}{http://dx.doi.org/10.1216/JCA-2016-8-1-113}}.}
{\let\newpage\relax\maketitle} 


\begin{abstract}
\noindent Let $D$ be an integral domain with quotient field $K$ and $\Omega$ a finite subset of $D$. McQuillan proved that the ring $\Int(\Omega,D)$ of polynomials in $K[X]$ which are integer-valued over $\Omega$, that is, $f\in K[X]$ such that $f(\Omega)\subset D$, is a Pr\"ufer domain if and only if $D$ is Pr\"ufer. Under the further assumption that $D$ is integrally closed, we generalize his result by considering a finite set $S$ of a $D$-algebra $A$ which is finitely generated and torsion-free as a $D$-module, and the ring $\Int_K(S,A)$ of integer-valued polynomials over $S$, that is, polynomials over $K$ whose image over $S$ is contained in $A$. We show that the integral closure of $\Int_K(S,A)$ is equal to the contraction to $K[X]$ of $\Int(\Omega_S,D_F)$, for some finite subset $\Omega_S$ of integral elements over $D$ contained in an algebraic closure $\olK$ of $K$, where $D_F$ is the integral closure of $D$ in $F=K(\Omega_S)$. Moreover, the integral closure of $\Int_K(S,A)$ is Pr\"ufer if and only if $D$ is Pr\"ufer.  The result is obtained by means of the study of pullbacks of the form $D[X]+p(X)K[X]$, where $p(X)$ is a monic non-constant polynomial over $D$: we prove that the integral closure of such a pullback is equal to the ring of polynomials over $K$ which are integral-valued over the set of roots $\Omega_p$ of $p(X)$ in $\overline K$. 
\end{abstract}
\vskip0.3cm
\small{\textbf{Keywords}:
Pullback, Integral closure, Integer-valued polynomial, Divided differences, Pr\"ufer ring. MSC Classification codes: 13B25 (primary), 13F20, 13B22, 13F05 (secondary).}

\section{Introduction}\label{intro}

Rings of integer-valued polynomials are a prominent source for providing examples of non-Noetherian Pr\"ufer domains (see the book \cite[Chapt. VI, p. 123]{CaCh}). Throughout this paper, $D$ is an integral domain which is not a field, and $K$ is its quotient field. We denote by $\overline{K}$ a fixed algebraic closure of $K$ and by $\overline{D}$ the integral closure of $D$ in $\overline{K}$. We give the following definition, which generalizes the classical definition of the ring of integer-valued polynomials over a subset (\cite[Chapt. I.1, p. 3]{CaCh}).
\vskip0.2cm
\begin{Def}
Let $R$ be an integral domain containing $D$. Let $F$ be the quotient field of $R$ (so that $K\subseteq F$). 
For a subset $\Omega$ of $F$ we set 
$$\Int_K(\Omega,R)\doteqdot \{f\in K[X] \mid f(\Omega)\subset R\},$$
which is the ring of polynomials in $K[X]$ which map every element of $\Omega$ into $R$. If $F=K$ we omit the subscript $K$. Thus, $\Int(\Omega,R)$ is a subring of $K[X]$ (the coefficients of the relevant polynomials are in the quotient field of $R$).
\end{Def}
In the case of a finite subset $\Omega$ of $D$, McQuillan studied the algebraic structure of the corresponding ring of integer-valued polynomials $\Int(\Omega,D)$, describing the spectrum of such a ring and also its additive structure (\cite{McQ}). Using McQuillan's arguments, in \cite{Boy} Boynton observed that $\Int(\Omega,D)$ fits in a pullback diagram. Here we generalize this class of rings by considering  first  a finite set $\Omega$ of integral elements in $\overline D$ and polynomials in $K[X]$ which preserve the integrality of the elements of $\Omega$, that is, for each $\alpha$ in $\Omega$, $f(\alpha)$ is integral over $D$; according to the above definition, this ring is denoted by $\Int_K(\Omega,\overline{D})$. For example, given a monic non-constant polynomial $p\in D[X]$, let $\Omega_p$ be the set of roots of $p(X)$ in a splitting field. Then the ring $\Int_K(\Omega_p,\overline{D})$ is of the above kind, and it is not difficult to show that the ring $\Int_K(\Omega,\overline{D})$, for a finite set $\Omega$ of $\overline{D}$, can be reduced to this case.  More generally, we consider a finite set $S$ of integral elements over $D$ which do not necessarily lie in an algebraic extension of $K$, i.e.: $S$ is contained in a $D$-algebra $A$, which is finitely generated and torsion-free as a $D$-module (for example, a matrix algebra or a quaternion algebra). We consider then polynomials in $K[X]$ which map the elements of $S$ into $A$: $\Int_K(S,A)=\{f\in K[X] \mid f(S)\subset A\}$. Note that $A$ is not necessarily commutative and may contain zero-divisors, and each of its elements satisfies a monic polynomial over $D$.

Given a monic polynomial $p(X)$ in $D[X]$, the study of the ring $\Int_K(\Omega_p,\overline{D})$ we are going to do goes through another kind of pullback ring. As for the rings $\Int_K(\Omega,\olD)$, the rings we introduce now are the pullbacks of the canonical residue map $K[X]\twoheadrightarrow \frac{K[X]}{p(X)K[X]}$ with respect to some subring of $\frac{K[X]}{p(X)K[X]}$, thus they are subrings of $K[X]$ sharing with $K[X]$ the ideal $p(X)K[X]$. 

\begin{Def}
Let $p(X)$ be a non-constant monic polynomial in $D[X]$. We consider the following subring of $K[X]$:
$$D(p)\doteqdot D[X]+p(X)\cdot K[X]=\{r(X)+p(X)q(X)\,|\,r\in D[X],q\in K[X]\}.$$
\end{Def}
It is straightforward to verify that the elements of this set form a ring under the usual operation of sum and product induced by the polynomial ring $K[X]$. In Lemma \ref{Dppullback} we will show that a polynomial $f(X)$ in $K[X]$ is in $D(p)$ if and only if the remainder in the division of $f(X)$ by $p(X)$ is in $D[X]$. Note that the principal ideal $p(X)\cdot K[X]$ of $K[X]$ is also an ideal of $D(p)$. We have  then the following diagram:
$$\xymatrix{
D(p)\ar@{>>}[d]\ar@{^{(}->}[r]&K[X]\ar@{>>}[d]\\
\frac{D(p)}{p(X)K[X]}\ar@{^{(}->}[r]&\frac{K[X]}{p(X)K[X]}}$$
so that $D(p)$ is a pullback of $K[X]$ (for a general reference about pullbacks see \cite{GabHou}). Examples of such pullbacks appear in \cite{Boy}, and more widely in \cite{Per2}.
 
We see at once that $D(p)$ is contained in $\Int_K(\Omega_p,\olD)$. Also, $\Int_K(\Omega_p,\olD)$ has the ideal $p(X)K[X]$ in common with $K[X]$, so, like $D(p)$, also $\Int_K(\Omega_p,\olD)$ is a pullback ring. This point of view is clearly a generalization of \cite[Example 4.4 (1)]{Boy}, which we briefly recall below in section \ref{Deg1}. 
 
We give some motivation which led us to study the pullback rings $D(p)=D[X]+p(X)\cdot K[X]$. In \cite{Per2} this kind of polynomial pullback arose as the ring of integer-valued polynomials over certain subsets of matrices. Let $M_n(D)$ be the $D$-algebra of $n\times n$ matrices with entries in $D$ and let $\Int_K(M_n(D))=\{f\in K[X]\,|\,f(M_n(D))\subset M_n(D)\}$, the ring of integer-valued polynomials over $M_n(D)$. Given a monic polynomial $p\in D[X]$ of degree $n$, we denote by $M_n^p(D)$ the set of matrices $M$ in $M_n(D)$ whose characteristic polynomial is equal to $p(X)$. We consider the overring of ${\rm Int}_K(M_n(D))$ made up by those polynomials which are integer-valued over $M_n^p(D)$, namely:
$$\Int_K(M_n^p(D),M_n(D))=\{f\in K[X]\,|\,f(M_n^p(D))\subset M_n(D)\}$$
This partition of $M_n(D)$ into subsets of matrices having prescribed characteristic polynomial was used in \cite{Per2} to give a characterization of the polynomials of $\Int_K(M_n(D))$ in terms of their divided differences (see \cite[Theorem 4.1]{Per2}). By \cite[Lemma 2.2 \& Remark 2.1]{Per2} we have 
$$\Int_K(M_n^p(D),M_n(D))=D(p).$$
In particular, the ring $\Int_K(M_n(D))$ is represented as an intersection of pullbacks $D(p)$, as $p(X)$ ranges through the set of all the monic polynomials in $D[X]$ of degree $n$ (\cite[Remarks 2.1 and 2.2]{Per2}). In \cite{PerWer} the authors address the following question, which generalizes the previous case: for a $D$-algebra $A$ as above, where $D$ is integrally closed, we consider the ring $\Int_K(A)=\{f\in K[X] \mid f(A)\subset A\}$ of integer-valued polynomials over $A$. Is $\Int_K(A)$ equal to the intersection of pullbacks of the form $D(p)$? In general, we have
$$\bigcap_{a \in A} D(\mu_a) \subseteq \Int_K(A),$$
where, for $a\in A$, $\mu_a(X)$ denotes the minimal polynomial of $a$ over $K$ (by assumption on $A$ and $D$, $\mu_a\in D[X]$ and is monic). The conditions under which the previous containment is an equality are not known.

Throughout the paper, given a monic polynomial $p(X)$ in $D[X]$, we denote by $\Omega_p$ the multi-set of its roots in $\overline{K}$ (we recall the notion of multi-set in section \ref{pullback div diff}).  

This work is organized as follows. In section \ref{pullback div diff} we recall a characterization for the polynomials in $D(p)$ in terms of their divided differences. We use this result to show that the ring $\Int^{\{n\}}(\Omega,D)$ of polynomials whose divided differences of order less than or equal to $n$ are integer-valued over a subset $\Omega$ of $D$ can be represented as an intersection of such pullbacks. This ring has been introduced by Bhargava in \cite{BG}; we recall the definition in that section. In section \ref{Int clos D(p)} we prove the following theorem:
\begin{Th}\label{1st theorem}
Let $p(X)$ a monic non-constant polynomial in $D[X]$. Then the integral closure of the ring $D(p)=D[X]+p(X)K[X]$ is the ring $\Int_K(\Omega_p,\olD)$.
\end{Th}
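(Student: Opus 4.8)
The plan is to prove the two inclusions $\Int_K(\Omega_p,\olD)\subseteq\overline{D(p)}$ and $\overline{D(p)}\subseteq\Int_K(\Omega_p,\olD)$ separately, by showing that $\Int_K(\Omega_p,\olD)$ is simultaneously integral over $D(p)$ and integrally closed in the quotient field $K(X)$ of $D(p)$. (That $\mathrm{Frac}(D(p))=K(X)$ is immediate: $D(p)\supseteq p(X)K[X]$ contains $p$, $pX$, and $p/c$ for every nonzero $c\in D$, hence contains $X$ and every element of $K$.) Write $\Omega_p=\{\alpha_1,\dots,\alpha_n\}$ for the roots of $p$ in $\olK$ listed with multiplicity, $n=\deg p$, and let $D'=\olD\cap K$ be the integral closure of $D$ in $K$. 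Recall from the introduction that $D(p)\subseteq\Int_K(\Omega_p,\olD)\subseteq K[X]$.

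For the inclusion $\Int_K(\Omega_p,\olD)\subseteq\overline{D(p)}$, fix $f\in\Int_K(\Omega_p,\olD)$ and form the monic polynomial $P(T)=\prod_{i=1}^{n}(T-f(\alpha_i))\in\olK[T]$. Since $\prod_{i=1}^n(T-f(Y_i))$ is symmetric in $Y_1,\dots,Y_n$ with coefficients in $K$ (because $f\in K[X]$), its coefficients are polynomials over $K$ in the elementary symmetric functions $e_k(Y_1,\dots,Y_n)$; specializing $Y_i\mapsto\alpha_i$ and using $e_k(\alpha_1,\dots,\alpha_n)=\pm(\text{a coefficient of }p)\in D$, one gets $P(T)\in K[T]$, and in fact $P(T)\in D'[T]$ since each $f(\alpha_i)$ lies in $\olD$. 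Next, $P(f(X))=\prod_{i=1}^n(f(X)-f(\alpha_i))$: for each distinct root $\beta$ of $p$ of multiplicity $e$, the corresponding factors contribute $(f(X)-f(\beta))^{e}$, which is divisible by $(X-\beta)^{e}$ because $X-\beta$ divides $f(X)-f(\beta)$; as the $(X-\beta)^{e}$ are pairwise coprime, $p(X)$ divides $P(f(X))$ in $\olK[X]$, hence in $K[X]$ as $p$ is monic. Therefore $P(f(X))\in p(X)K[X]\subseteq D(p)$. Writing $P(T)=T^n+a_{n-1}T^{n-1}+\dots+a_0$ with $a_k\in D'$ and $c=P(f(X))\in D(p)$, the element $f$ is a root of the monic polynomial $T^n+a_{n-1}T^{n-1}+\dots+a_1T+(a_0-c)$, all of whose coefficients lie in $R=D(p)[a_0,\dots,a_{n-1}]$. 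Since each $a_k\in D'$ is integral over $D\subseteq D(p)$, $R$ is a finitely generated $D(p)$-module, so $R[f]$ is a finitely generated $D(p)$-module and $f$ is integral over $D(p)$.

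For the reverse inclusion it suffices to show that $\Int_K(\Omega_p,\olD)$ is integrally closed in $K(X)$. Let $g\in K(X)$ be integral over $\Int_K(\Omega_p,\olD)$; since $\Int_K(\Omega_p,\olD)\subseteq K[X]$, $g$ is integral over $K[X]$, and $K[X]$ being integrally closed forces $g\in K[X]$. Now, given an integral equation $g^m+b_{m-1}g^{m-1}+\dots+b_0=0$ with $b_j\in\Int_K(\Omega_p,\olD)$, apply for each $i$ the evaluation homomorphism $K[X]\to\olK$, $X\mapsto\alpha_i$: we obtain $g(\alpha_i)^m+b_{m-1}(\alpha_i)g(\alpha_i)^{m-1}+\dots+b_0(\alpha_i)=0$ with all $b_j(\alpha_i)\in\olD$, so $g(\alpha_i)$ is integral over $\olD$ and hence lies in $\olD$ (which is integrally closed in $\olK$). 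Thus $g\in\Int_K(\Omega_p,\olD)$, proving integral closedness; combined with the previous paragraph, $\overline{D(p)}=\Int_K(\Omega_p,\olD)$.

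The main obstacle is the first inclusion: the obvious idea of recovering the coefficients of $f\bmod p$ from the values $f(\alpha_i)$ by Lagrange interpolation introduces denominators $\prod_{j\ne i}(\alpha_i-\alpha_j)$ and does not exhibit any integral dependence over $D(p)$. The point that makes the argument work is to replace interpolation by the single ``norm-type'' polynomial $P(T)=\prod_i(T-f(\alpha_i))$ together with the divisibility $p(X)\mid P(f(X))$, which converts $P$ into an honest monic relation for $f$ over a module-finite extension of $D(p)$. A secondary technical point is the bookkeeping of multiplicities when $p$ is not separable, which is precisely why $\Omega_p$ must be read as a multi-set; the divisibility step above is written so as to cover that case.
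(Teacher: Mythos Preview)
Your proof is correct and rests on the same key device as the paper's: the monic ``norm'' polynomial $P(T)=\prod_{i}(T-f(\alpha_i))$ together with the divisibility $p(X)\mid P(f(X))$, which yields an integral equation for $f$ over $D(p)$. The execution differs in a few places worth noting. The paper first reduces to the case where $D$ is integrally closed (via $D(p)\subseteq D'(p)$ integral) and then invokes Lemma~\ref{pPp}, which proves $P\in D[T]$ by a companion-matrix/characteristic-polynomial argument; you instead keep $D$ arbitrary, show $P\in D'[T]$ by the fundamental theorem of symmetric polynomials, and absorb the $D'$-coefficients via transitivity of integrality. Likewise, the paper obtains integral closedness of $\Int_K(\Omega_p,\olD)$ from Remark~\ref{IntKOmegaD integ closed} (ultimately \cite[Prop.~IV.4.1]{CaCh}), while you verify it directly by evaluating an integral dependence at each $\alpha_i$. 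Your treatment is thus a bit more self-contained and elementary, but it is the same argument in substance rather than a genuinely different route.
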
 
As a corollary, we show that the integral closure of $\Int^{\{n\}}(\Omega,D)$ is equal to the ring $\Int(\Omega,D)$, in the case of a finite subset $\Omega$ of $D$. For a general subset $\Omega$ of $D$, in the case where $D$ has finite residue rings, an argument from \cite{PerWer} gives the same conclusion. In section \ref{PrufRingIntVal}, we prove the main theorem:
\begin{Th}\label{2nd theorem}
Assume $D$ integrally closed and let $\Omega$ be a finite subset of $\olD$. Then the ring $\Int_K(\Omega,\olD)$ is Pr\"ufer if and only if $D$ is Pr\"ufer.
\end{Th}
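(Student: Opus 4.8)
The plan is to reduce the statement, via a finite field extension, to McQuillan's theorem on integer-valued polynomials over a finite subset, using Theorem~\ref{1st theorem} to keep track of integral closures. We may assume $\Omega\neq\emptyset$. For $\alpha\in\Omega$ let $\mu_\alpha\in D[X]$ be its minimal polynomial over $K$ (monic, with coefficients in $D$, since $D$ is integrally closed and $\alpha$ is integral over $D$), and set $p(X)=\prod\mu_\alpha(X)$, the product over the \emph{distinct} polynomials $\mu_\alpha$, so that $p$ is monic, non-constant and squarefree in $D[X]$. If $\sigma$ is a $K$-automorphism of $\olK$ and $f\in K[X]$, then $f(\sigma(\alpha))=\sigma(f(\alpha))$ and $\olD$ is $\sigma$-stable; as the roots of $\mu_\alpha$ in $\olK$ are exactly the $K$-conjugates of $\alpha$, we get $\Int_K(\Omega,\olD)=\Int_K(\Omega_p,\olD)$. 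Put $F=K(\Omega_p)$, a finite extension of $K$, and $D_F=\olD\cap F$, the integral closure of $D$ in $F$; then $\Omega_p\subseteq D_F$, and since $f(\Omega_p)\subseteq F$ for $f\in K[X]$,
$$\Int_K(\Omega,\olD)=\Int_K(\Omega_p,D_F)=\Int(\Omega_p,D_F)\cap K[X],$$
where $\Int(\Omega_p,D_F)=\{g\in F[X]\mid g(\Omega_p)\subseteq D_F\}$ is the classical ring of integer-valued polynomials over the finite subset $\Omega_p$ of $D_F$. I will prove the equivalence of: (i) $\Int_K(\Omega_p,D_F)$ is Pr\"ufer; (ii) $\Int(\Omega_p,D_F)$ is Pr\"ufer; (iii) $D_F$ is Pr\"ufer; (iv) $D$ is Pr\"ufer.

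\textit{The key step: $\Int(\Omega_p,D_F)$ is the integral closure of $\Int_K(\Omega_p,D_F)$ in $F(X)$.} Both rings contain, respectively, the ideals $p(X)F[X]$ of $F[X]$ and $p(X)K[X]$ of $K[X]$, hence their quotient fields are $F(X)$ and $K(X)$, with $[F(X):K(X)]=[F:K]<\infty$. The ring $\Int(\Omega_p,D_F)=\bigcap_{\alpha\in\Omega_p}\bigl(D_F+(X-\alpha)F[X]\bigr)$ is integrally closed, since $D_F$ is and thus each $D_F+(X-\alpha)F[X]$ is (see \cite{CaCh}), and an intersection of integrally closed subrings of a field is integrally closed. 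It is integral over $\Int_K(\Omega_p,D_F)$: for $g\in\Int(\Omega_p,D_F)$ let $\chi_g(T)\in K(X)[T]$ be the characteristic polynomial of multiplication by $g$ on the $[F:K]$-dimensional $K(X)$-vector space $F(X)$; it is monic and $\chi_g(g)=0$. Letting $\sigma$ range over the $K$-embeddings of $F$ into $\olK$, extended to $K$-automorphisms of $\olK$ (still denoted $\sigma$) and acting on $F[X]$ through the coefficients, $g\mapsto g^\sigma$, the coefficients of $\chi_g$ are $\Z$-polynomial expressions in the $g^\sigma$; being in $K(X)$ and integral over $K[X]$, they lie in $K[X]$. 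Moreover $g^\sigma(\beta)=\sigma\bigl(g(\sigma^{-1}(\beta))\bigr)$ for $\beta\in\Omega_p$, and since $\sigma^{-1}(\beta)\in\Omega_p$ (as $\Omega_p$ is the root set of $p\in D[X]$), we get $g^\sigma(\beta)\in\sigma(D_F)\subseteq\olD$; hence the coefficients of $\chi_g$ map $\Omega_p$ into $\olD$, i.e.\ they lie in $\Int_K(\Omega_p,\olD)=\Int_K(\Omega_p,D_F)$. So $g$ is a root of a monic polynomial over $\Int_K(\Omega_p,D_F)$. Finally, $\Int_K(\Omega_p,D_F)=\Int_K(\Omega,\olD)$ is integrally closed, being the integral closure of $D(p)$ by Theorem~\ref{1st theorem}; this proves the claim.

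\textit{Conclusion.} It is classical that, for an integrally closed domain $R$ and the integral closure $R'$ of $R$ in an algebraic extension of its quotient field, $R$ is Pr\"ufer if and only if $R'$ is (see \cite{CaCh}). Applied to $\Int_K(\Omega_p,D_F)\subseteq\Int(\Omega_p,D_F)$ inside $F(X)$, this gives (i)$\Leftrightarrow$(ii); applied to $D\subseteq D_F$ inside $F$, it gives (iii)$\Leftrightarrow$(iv). McQuillan's theorem (recalled in the introduction), applied to the finite subset $\Omega_p$ of $D_F$, gives (ii)$\Leftrightarrow$(iii). Chaining these equivalences with the identification $\Int_K(\Omega,\olD)=\Int_K(\Omega_p,D_F)$ proves the theorem. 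The delicate point is the key step above — that $\Int(\Omega_p,D_F)$ is integral over $\Int_K(\Omega_p,D_F)$, where the coefficients of $\chi_g$ must be handled carefully (in particular in positive characteristic, where inseparability affects the factorization of $\chi_g$ into linear factors over a splitting field) — together with invoking precisely the stability of the Pr\"ufer property, in both directions, under integral closure in algebraic extensions of an integrally closed domain.
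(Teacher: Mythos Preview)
Your argument is correct and follows the same global architecture as the paper's: reduce $\Omega$ to the root set $\Omega_p$ of a monic $p\in D[X]$, pass to the splitting field $F$, identify $\Int_K(\Omega_p,\olD)=\Int_K(\Omega_p,D_F)$, show that $\Int(\Omega_p,D_F)$ is the integral closure of $\Int_K(\Omega_p,D_F)$ in $F(X)$, and then chain McQuillan's theorem with the stability of the Pr\"ufer property under integral closure in algebraic extensions. The difference lies in how the key integrality step is proved. The paper routes it through the pullbacks: it establishes a separate lemma that $D(p)\subseteq D_F(p)$ is integral, and combines this with two applications of Theorem~\ref{1st theorem} (once over $D$, once over $D_F$) to conclude that $\Int(\Omega_p,D_F)$ is integral over $D(p)$, hence over the intermediate ring $\Int_K(\Omega_p,D_F)$. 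You instead give a direct Galois/norm argument, producing for each $g\in\Int(\Omega_p,D_F)$ the characteristic polynomial $\chi_g(T)\in K(X)[T]$ and checking that its coefficients land in $\Int_K(\Omega_p,D_F)$ via the stability of $\Omega_p$ and $\olD$ under $K$-automorphisms of $\olK$. Your route is more self-contained (no auxiliary lemma on $D_F(p)$ is needed) and makes the Galois-theoretic content explicit; the paper's route is shorter once the pullback machinery is already in place and sidesteps the inseparability caveat you flag. One minor observation: you invoke Theorem~\ref{1st theorem} only to conclude that $\Int_K(\Omega_p,D_F)$ is integrally closed, but this follows more cheaply from the identity $\Int_K(\Omega_p,D_F)=\Int(\Omega_p,D_F)\cap K[X]$ together with the fact that $\Int(\Omega_p,D_F)$ is integrally closed (the paper records exactly this as a standalone remark), so your proof could in fact be made independent of Theorem~\ref{1st theorem} altogether.
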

If $\Omega\subset D$, then this is precisely the main result obtained by McQuillan. The crucial remark is that, for a monic polynomial $p(X)$ in $D[X]$, $\Int_K(\Omega_p,\olD)\subseteq\Int_F(\Omega_p,\olD)$ is an integral ring extension, where $F=K(\Omega_p)$ is the splitting field of $p(X)$. It is not difficult to see that $\Int_F(\Omega,\olD)$ is equal to $\Int_F(\Omega,D_F)$, where $D_F$ is the integral closure of $D$ in $F$, and this is precisely the kind of ring considered by McQuillan. We note that this is a partial answer to \cite[Question 29]{PerWer}, where we asked if $\Int_K(\Omega,\olD)$ is Pr\"ufer, when $\Omega$ is a subset of integral elements of degree over $K$ bounded by some positive integer $n$. If $D$ is integrally closed, we give also a criterion to establish when the pullback $D(p)$ is integrally closed, that is, equal to $\Int_K(\Omega_p,\olD)$ (see Theorem \ref{main thm}). In particular, in the case of a Pr\"ufer domain $D$, this condition is satisfied automatically if $D(p)$ is integrally closed.

Finally, in the last section, we apply the previous results in the more general setting of a finite set $S$ of integral elements over $D$ which do not necessarily lie in an algebraic extension of $K$.

\begin{Cor}\label{3rd result} Assume $D$ integrally closed and let $S$ be a finite set of a torsion-free $D$-algebra $A$, which is finitely generated as a $D$-module. Let $\Omega_S$ be the set of roots in $\olD$ of the minimal polynomials of $s$ over $D$, as $s$ ranges through $S$. Then the integral closure of $\Int_K(S,A)$ is $\Int_K(\Omega_S,\olD)$.
\end{Cor}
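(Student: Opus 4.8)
The plan is to squeeze $\Int_K(S,A)$ between a single pullback $D(p)$ and its integral closure, and then invoke Theorem~\ref{1st theorem}. Write $S=\{s_1,\dots,s_m\}$ (we may assume $S\neq\emptyset$, the empty case being trivial). For each $i$ let $\mu_i(X)$ be the monic generator of the kernel of the evaluation homomorphism $K[X]\to A\otimes_D K$ at $s_i$; this kernel is nonzero because $A\otimes_D K$ is a finite-dimensional $K$-algebra, so $\deg\mu_i\geq 1$. Since $A$ is module-finite over $D$ it is integral over $D$, hence so is each $s_i$; the roots of $\mu_i$ in $\olK$ divide any monic relation for $s_i$ over $D$ and are therefore integral over $D$, so the coefficients of $\mu_i$ lie in $K$ and are integral over $D$, whence $\mu_i\in D[X]$ because $D$ is integrally closed (this is the minimal polynomial of $s_i$ appearing in the statement). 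Now set $p(X)\doteqdot\prod_{i=1}^m\mu_i(X)\in D[X]$, a monic non-constant polynomial whose set of roots in $\olK$ is exactly $\Omega_S=\bigcup_i\Omega_{\mu_i}$, so that $\Int_K(\Omega_p,\olD)=\Int_K(\Omega_S,\olD)$, membership in these rings depending only on the underlying set of roots.

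Next I would prove the inclusions
$$D(p)\subseteq\Int_K(S,A)\subseteq\Int_K(\Omega_S,\olD).$$
For the first, let $f\in D(p)$; by Lemma~\ref{Dppullback} the remainder $r(X)$ of $f$ modulo $p$ lies in $D[X]$, and since $\mu_i\mid p$ we have $p(s_i)=0$ (a relation that holds already in $A$, as $A$ is torsion-free and hence embeds in $A\otimes_D K$), so $f(s_i)=r(s_i)\in D[s_i]\subseteq A$ for all $i$. For the second, let $f\in\Int_K(S,A)$; then $f(s_i)\in A$ is integral over $D$, say $g(f(s_i))=0$ with $g\in D[X]$ monic, so $g\circ f$ lies in the kernel of evaluation at $s_i$, i.e.\ $\mu_i\mid g\circ f$ in $K[X]$; consequently $g(f(\alpha))=0$ for every root $\alpha$ of $\mu_i$ in $\olK$, forcing $f(\alpha)\in\olD$. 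Letting $i$ and $\alpha$ vary gives $f\in\Int_K(\Omega_S,\olD)$.

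Finally, all the rings above are subrings of $K[X]$ containing $D[X]$, so they share the quotient field $K(X)$ and their integral closures are taken there. By Theorem~\ref{1st theorem} the integral closure of $D(p)$ is $\Int_K(\Omega_p,\olD)=\Int_K(\Omega_S,\olD)$; in particular $\Int_K(\Omega_S,\olD)$ is integrally closed. Taking integral closures throughout the chain $D(p)\subseteq\Int_K(S,A)\subseteq\Int_K(\Omega_S,\olD)$ then yields
$$\Int_K(\Omega_S,\olD)\subseteq\overline{\Int_K(S,A)}\subseteq\overline{\Int_K(\Omega_S,\olD)}=\Int_K(\Omega_S,\olD),$$
so all three coincide, which is the assertion. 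No real obstacle is expected: the proof is a two-sided squeeze resting on Theorem~\ref{1st theorem}, and the only thing requiring care is that $A$ may be noncommutative and possess zero-divisors — but the relevant computations ($\mu_i$ being well defined and lying in $D[X]$, the vanishings $p(s_i)=0$ and $g(f(s_i))=0$, and the passage to roots in $\olK$ via divisibility in $K[X]$) all take place inside the commutative finite-dimensional $K$-algebra $K[s_i]\subseteq A\otimes_D K$, and so reduce to standard facts.
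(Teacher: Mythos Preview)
Your proof is correct and follows essentially the same squeeze as the paper: define $p=\prod_{s\in S}\mu_s$, sandwich $\Int_K(S,A)$ between $D(p)$ and $\Int_K(\Omega_S,\olD)$, and apply Theorem~\ref{1st theorem}. The only difference is in the upper inclusion: the paper routes it through the intermediate ring $\Int_K(S,S')$ and invokes \cite[Theorem 9]{PerWer} (the eigenvalue argument via the embedding $A\hookrightarrow\mathrm{End}_K(B)$) for the identification $\Int_K(S,S')=\Int_K(\Omega_S,\olD)$, whereas you prove $\Int_K(S,A)\subseteq\Int_K(\Omega_S,\olD)$ directly by the divisibility argument $\mu_i\mid g\circ f$ in $K[X]$ --- a self-contained shortcut that avoids the cited result but gives slightly less (it does not yield the full equality $\Int_K(S,S')=\Int_K(\Omega_S,\olD)$, which however is not needed here).
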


\vskip0.2cm

\subsection{Preliminary results}

In the case of a monic polynomial, the following lemma determines the quotient of $D(p)$ by the ideal $p(X)K[X]$. 
We denote by $\pi:K[X]\twoheadrightarrow \frac{K[X]}{p(X)K[X]}$ the canonical residue map, which associates to a polynomial $f\in K[X]$ the residue class $f(X)+p(X)K[X]$.

\begin{Lemma}\label{Dppullback}
Let $p\in D[X]$ be a monic non-constant polynomial. Then $D(p)$ is the pullback of $\frac{D[X]}{p(X)D[X]}\hookrightarrow\frac{K[X]}{p(X)K[X]}$ with respect to the canonical residue map $\pi:K[X]\twoheadrightarrow \frac{K[X]}{p(X)K[X]}$. In other words, the following is a pullback diagram (i.e.: $D(p)=\pi^{-1}(\frac{D[X]}{p(X)D[X]})$):
$$\xymatrix{
D(p)\ar@{>>}[d]\ar@{^{(}->}[r]&K[X]\ar@{>>}[d]\\
\frac{D[X]}{p(X)D[X]}\ar@{^{(}->}[r]&\frac{K[X]}{p(X)K[X]}}$$
In particular, a polynomial $f\in K[X]$ belongs to $D(p)$ if and only if the remainder in the division by $p(X)$ in $K[X]$ belongs to $D[X]$. Equivalently, we have
$$\frac{D(p)}{p(X)\cdot K[X]}\cong \frac{D[X]}{p(X)\cdot D[X]}.$$
\end{Lemma}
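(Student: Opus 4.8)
The plan is to establish the pullback description of $D(p)$ directly from the Euclidean division algorithm in $K[X]$, which is available because $p(X)$ is monic. First I would take an arbitrary $f \in K[X]$ and write $f(X) = p(X)q(X) + r(X)$ with $q, r \in K[X]$ and $\deg r < \deg p$; monicity of $p$ is exactly what guarantees such $q,r$ exist and are unique (no denominators are forced by a leading coefficient). The key observation is then that $f \in D(p)$, i.e.\ $f = \tilde r + p\tilde q$ with $\tilde r \in D[X]$, $\tilde q \in K[X]$, if and only if $r \in D[X]$: the ``if'' direction is immediate, and for ``only if'' one notes that $f - p\tilde q = \tilde r \in D[X]$ has degree-$<\deg p$ remainder upon division by $p$ equal to its own reduction, so by uniqueness of the remainder (performed in $K[X]$) we get that the remainder of $f$ is congruent to $\tilde r$ modulo $p(X)K[X]$; since both have degree $<\deg p$, and the remainder of $\tilde r \in D[X]$ by the monic $p \in D[X]$ again lies in $D[X]$, one concludes $r \in D[X]$. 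This gives the ``in particular'' clause.

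Next I would phrase this set-theoretically as $D(p) = \pi^{-1}\!\left(\frac{D[X]}{p(X)D[X]}\right)$. Here one must be slightly careful about what $\frac{D[X]}{p(X)D[X]}$ means as a subset of $\frac{K[X]}{p(X)K[X]}$: the natural map $\frac{D[X]}{p(X)D[X]} \to \frac{K[X]}{p(X)K[X]}$ sends the class of $r \in D[X]$ to $r + p(X)K[X]$, and because $p$ is monic this map is injective (if $r \in D[X]$ lies in $p(X)K[X]$, then dividing $r$ by $p$ in $K[X]$ gives remainder $0$, but the division can be carried out in $D[X]$ since $p$ is monic, so $r \in p(X)D[X]$). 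Thus $\frac{D[X]}{p(X)D[X]}$ is genuinely a subring of $\frac{K[X]}{p(X)K[X]}$, and $\pi^{-1}$ of it consists precisely of those $f$ whose residue class is represented by some element of $D[X]$, equivalently — using that every residue class has a unique representative of degree $<\deg p$, namely the remainder — those $f$ whose remainder lies in $D[X]$. Combining with the previous paragraph, this is exactly $D(p)$, and the square is then a pullback diagram essentially by definition of pullback (fibered product of $\frac{D[X]}{p(X)D[X]} \hookrightarrow \frac{K[X]}{p(X)K[X]} \twoheadleftarrow K[X]$), with the induced vertical map $D(p) \twoheadrightarrow \frac{D[X]}{p(X)D[X]}$ being surjective since every $r \in D[X]$ already lies in $D(p)$.

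Finally the isomorphism $\frac{D(p)}{p(X)K[X]} \cong \frac{D[X]}{p(X)D[X]}$ follows formally: $p(X)K[X]$ is an ideal of $D(p)$ (it is even an ideal of $K[X]$ and is contained in $D(p)$), the surjection $D(p) \twoheadrightarrow \frac{D[X]}{p(X)D[X]}$ sending $\tilde r + p\tilde q \mapsto \tilde r + p(X)D[X]$ is well-defined by the injectivity just established, and its kernel is exactly $\{f \in D(p) : \text{remainder of } f \text{ by } p \text{ is } 0\} = p(X)K[X] \cap D(p) = p(X)K[X]$.

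I do not expect a serious obstacle here; the only points requiring genuine care — and where monicity of $p$ is used essentially — are (i) that Euclidean division by $p$ can be performed both in $K[X]$ and, when the dividend has coefficients in $D$, in $D[X]$, and (ii) the resulting injectivity of $\frac{D[X]}{p(X)D[X]} \hookrightarrow \frac{K[X]}{p(X)K[X]}$, which is what makes the phrase ``pullback with respect to a subring inclusion'' literally correct rather than merely a fibered-product of rings with a non-injective leg.
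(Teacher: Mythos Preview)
Your proof is correct and follows essentially the same approach as the paper: both arguments hinge on Euclidean division by the monic $p(X)$ and on the equality $p(X)K[X]\cap D[X]=p(X)D[X]$ (equivalently, the injectivity of $D[X]/p(X)D[X]\hookrightarrow K[X]/p(X)K[X]$). The paper packages the same content slightly differently by emphasizing the free $D$-module structure of $D[t]\cong D[X]/p(X)D[X]$ with basis $\{1,t,\dots,t^{n-1}\}$ and invoking the second isomorphism theorem, whereas you spell out the remainder argument and the well-definedness/kernel computation directly; the substance is the same.
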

\begin{proof} 
Since $p(X)$ is monic, we have two consequences. Firstly, $D[t]\cong\frac{D[X]}{p(X)D[X]}$ is a free $D$-module of rank $n=\deg(p)$ with basis $\{1,t,\ldots,t^{n-1}\}$, where $t$ is the residue class of $X$ modulo $p(X)D[X]$.  In particular, every element $r\in D[t]$ can be uniquely represented as $r(t)=\sum_{i=0,\ldots,n-1}c_i t^i$, with $c_i\in D$. Secondly, $p(X)\cdot K[X]\cap D[X]=p(X)\cdot D[X]$, so the image of the restriction of $\pi$ to $D[X]$ is isomorphic to $D[t]$. Therefore, $D[t]\cong\frac{D[X]}{p(X)D[X]}$ embeds naturally into $\frac{K[X]}{p(X)K[X]}\cong K[t]$ (the class $X \pmod{p(X)D[X]}$ is mapped to $X \pmod{p(X)K[X]}$, so without confusion we may denote them with the same letter $t$). Note that $K[t]$ is a free $K$-module of rank $n$ with the same basis $\{1,t,\ldots,t^{n-1}\}$.

We consider now the composition of mappings $D[X]\hookrightarrow D(p)\twoheadrightarrow D(p)/p(X)K[X]$. By the second consequence above and by the second isomorphism theorem we have the isomorphism of the claim. More explicitly, given $f\in K[X]$, there exist (uniquely determined) a quotient $q\in K[X]$ and a remainder $r\in K[X]$ (with either $r=0$ or $\deg(r)<\deg(p)$) such that $f(X)=r(X)+q(X)p(X)$. Hence, if $r(X)=\sum_i c_iX^i$, then  $\pi(f)=\pi(r)=r(t)=\sum_i c_i t^i\in K[t]$. From the algebraic structure of $D[t]$ we deduce that $r(t)$ is in $D[t]$ if and only if the remainder $r(X)$ is in $D[X]$. This condition in turn is equivalent to $f\in D(p)$.
\end{proof}

\vskip0.5cm

\begin{Lemma}\label{DpDq}
Let $p,q\in D[X]$ be monic polynomials. Then
\begin{center}
$D(p)$ is contained in $D(q)\Leftrightarrow p(X)$ is divisible by $q(X)$. 
\end{center}
In particular, $D(p)=D(q)\Leftrightarrow p(X)=q(X)$.
\end{Lemma}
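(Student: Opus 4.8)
The plan is to prove the two implications of the equivalence separately, exploiting the concrete description of membership in $D(p)$ via remainders provided by Lemma \ref{Dppullback}, and then to deduce the ``in particular'' clause formally.

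For the direction ``$q(X)\mid p(X)\Rightarrow D(p)\subseteq D(q)$'' the argument will be purely formal. Since $p$ and $q$ are monic and $q\mid p$, I can write $p(X)=q(X)h(X)$ with $h\in D[X]$ (the quotient of two monic polynomials of $D[X]$ again lies in $D[X]$). Then $p(X)K[X]=h(X)q(X)K[X]\subseteq q(X)K[X]$, and hence $D(p)=D[X]+p(X)K[X]\subseteq D[X]+q(X)K[X]=D(q)$. I expect no difficulty here.

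For the converse, assume $D(p)\subseteq D(q)$, and perform the (monic) division $p(X)=q(X)s(X)+r(X)$ with $s,r\in D[X]$ and $r=0$ or $\deg r<\deg q$; the goal is to show $r=0$. The idea is to test the inclusion on the ``most divisible'' elements of $D(p)$: for every nonzero $d\in D$ one has $\tfrac1d p(X)=p(X)\cdot\tfrac1d\in p(X)K[X]\subseteq D(p)\subseteq D(q)$, so by Lemma \ref{Dppullback} the remainder of $\tfrac1d p$ in the division by $q$ lies in $D[X]$. But that remainder is exactly $\tfrac1d r$ (as $\deg(\tfrac1d r)<\deg q$), so every coefficient $c$ of $r$ satisfies $c\in dD$ for all nonzero $d\in D$. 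Now $\bigcap_{d\neq 0}dD=(0)$ because $D$ is not a field: if $c$ were a nonzero such element, then picking a nonzero non-unit $m\in D$ and taking $d=cm$ would give $c=cmd'$ for some $d'\in D$, whence $md'=1$, contradicting that $m$ is a non-unit. Therefore $r=0$, i.e. $q(X)\mid p(X)$ in $D[X]$ (equivalently in $K[X]$). The only point needing a little care — more than an obstacle — is precisely this use of the standing hypothesis that $D$ is not a field; without it the statement is false, since over a field $D(p)=K[X]=D(q)$ for all monic $p,q$.

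Finally, for the last assertion: $D(p)=D(q)$ holds iff $D(p)\subseteq D(q)$ and $D(q)\subseteq D(p)$, which by the equivalence just proved means $q\mid p$ and $p\mid q$, i.e. $p$ and $q$ are associate in $K[X]$; being monic, this forces $p(X)=q(X)$.
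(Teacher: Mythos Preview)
Your proof is correct and follows essentially the same approach as the paper. Both arguments test the inclusion on scalar multiples of $p(X)$ lying in $p(X)K[X]\subseteq D(p)$ and inspect the remainder modulo $q$; the only minor difference is that the paper picks a \emph{single} scalar $c\in K\setminus D$ with $c\cdot r(X)\notin D[X]$ (e.g.\ $c=c'/a$ for any $c'\in K\setminus D$ and $a$ a nonzero coefficient of $r$) to reach a contradiction in one step, whereas you run through all $c=1/d$ with $d\in D\setminus\{0\}$ and invoke $\bigcap_{d\neq 0}dD=(0)$.
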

\begin{proof} One direction is easy. Conversely, suppose $D(p)\subseteq D(q)$ so that $p(X)=r(X)+q(X)k(X)$, for some $r\in D[X]$, $r=0$ or $\deg(r)<\deg(q)$, $k\in K[X]$. If $r\not=0$, let $c\in K\setminus D$ be such that $c\cdot r(X)$ is not in $D[X]$. Then $c\cdot p$ is in $D(p)$ but it is not in $D(q)$, contradiction. Notice that $k(X)$ has to be in $D[X]$ (see also \cite[Lemma]{McA}).
\end{proof}
\vskip0.5cm

The following two cases, linear and irreducible polynomial, are given as an example and to further illustrate the connection between polynomial pullbacks and rings of integer-valued polynomials.

\subsection{Linear case}\label{Deg1}
In the linear case the connection between the polynomial pullbacks and ring of integer-valued polynomials over finite sets becomes evident. Suppose $p(X)=X-a\in D[X]$. Then the remainder of the division of a polynomial $f\in K[X]$ by $X-a$ is the value of $f(X)$ at $a$. Hence, 
$$D(p)=D+(X-a)\cdot K[X]=\Int(\{a\},D)$$
It is well-known (see for example \cite[Proposition IV.4.1]{CaCh}) that $\Int(\{a\},D)$ is integrally closed if and only if $D$ is. It is easy to see that the integral closure of $\Int(\{a\},D)$ is $\Int(\{a\},D')$, where $D'$ is the integral closure of $D$ in $K$ (notice that $\Int(\{a\},D')=D'+(X-a)K[X]$ is a pullback). More generally, we recall the following result.

\begin{Lemma}\label{IntED}
Let $E\subset K$ be a finite set.  Then the integral closure of $\Int(E,D)$ is $\Int(E,D')$, where $D'$ is the integral closure of $D$ in $K$.
\end{Lemma}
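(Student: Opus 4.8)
The plan is to reduce the finite-set case to the single-element case handled just above, and then to leverage the pullback structure exactly as in the linear example. First I would record the two containments that bracket the integral closure. On one side, $\Int(E,D) \subseteq \Int(E,D')$ and $\Int(E,D')$ is integral over $\Int(E,D)$: indeed any $f \in \Int(E,D')$ has all values $f(a)$, $a \in E$, integral over $D$, and one can use these finitely many integral dependence relations (together with the fact that $D'[X]$ is generated as a $D[X]$-module, locally at each value, by elements that are integral over $D[X]$) to exhibit a monic equation for $f$ over $\Int(E,D)$; alternatively, write $\Int(E,D') = \bigcap_{a \in E}\Int(\{a\},D')$ and use that each $\Int(\{a\},D') = D' + (X-a)K[X]$ is integral over $\Int(\{a\},D) = D + (X-a)K[X]$, hence the finite intersection $\Int(E,D')$ is integral over $\bigcap_{a}\Int(\{a\},D) \supseteq \Int(E,D)$. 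On the other side, $\Int(E,D')$ is integrally closed: its quotient field is $K(X)$, and if $g \in K(X)$ is integral over $\Int(E,D')$ then, specializing at each $a \in E$ where $g$ is defined, $g(a)$ is integral over $D'$, hence lies in $D'$; combined with the classical fact (\cite[Proposition IV.4.1]{CaCh}, or the single-point pullback being integrally closed when $D'$ is) that $\Int(E,D')$ is the full integral closure of $D'[X]$-type pullbacks, one gets $g \in \Int(E,D')$.

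The cleanest route, which I would actually write out, is the pullback route. By Lemma \ref{Dppullback}-style reasoning (or directly), $\Int(\{a\},D') = D' + (X-a)K[X]$ is a pullback of $D' \hookrightarrow K$ along the evaluation map $K[X] \twoheadrightarrow K$, $f \mapsto f(a)$. Since $D'$ is integrally closed in $K$, a standard pullback lemma gives that this ring is integrally closed in $K[X]$, hence in its quotient field $K(X)$. A finite intersection of integrally closed subrings of $K(X)$ sharing the same quotient field is integrally closed, so $\Int(E,D') = \bigcap_{a \in E}\Int(\{a\},D')$ is integrally closed. For the integrality of the extension, I would argue one point at a time: $\Int(\{a\},D')$ is module-finite — actually integral — over $\Int(\{a\},D)$ because $D'$ is integral over $D$ and an integral dependence relation for $d' \in D'$ over $D$ lifts to one for the constant polynomial $d'$ over $D + (X-a)K[X]$. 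Then the finite intersection $\Int(E,D')$ is integral over $\bigcap_{a}\Int(\{a\},D)$, which contains $\Int(E,D)$; since $\Int(E,D) \subseteq \Int(E,D') $ and the larger ring is integral over a subring of $\Int(E,D)$, a fortiori $\Int(E,D')$ is integral over $\Int(E,D)$.

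Putting the two halves together: $\Int(E,D')$ is an integrally closed ring, integral over $\Int(E,D)$, and contained in the quotient field $K(X)$ of $\Int(E,D)$; therefore it is precisely the integral closure of $\Int(E,D)$ in $K(X)$.

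The main obstacle is the integrality of the extension $\Int(E,D) \subseteq \Int(E,D')$: passing from the pointwise statement ``each $f(a)$ is integral over $D$'' to a single monic polynomial equation for $f$ with coefficients in $\Int(E,D)$ requires care, because the naive product of the $|E|$ individual monic relations has coefficients that are polynomials evaluated at the various $a$'s, and one must check these coefficients genuinely land in $\Int(E,D)$ rather than merely in $\Int(E,D')$. Routing through the single-point rings $D+(X-a)K[X]$, where the constant $d'$ satisfies its $D$-relation verbatim, sidesteps this, at the cost of the (standard, routine) lemma that integrality is preserved under finite intersections of overrings inside a common field. I would spell out that routing in detail and leave the intersection lemma as a reference.
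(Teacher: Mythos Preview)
The half showing that $\Int(E,D')$ is integrally closed is fine and matches the paper's citation of \cite[Proposition IV.4.1]{CaCh}. The gap is in the integrality direction, where the route you chose to actually write out (your approach (b)) rests on the claim that integrality passes to finite intersections of overrings: from $R_a\subseteq S_a$ integral for each $a$ you want $\bigcap_a R_a\subseteq\bigcap_a S_a$ integral. This is not a standard lemma, and it is false in general. For a concrete counterexample in a common field, work inside $k(t)$ with $k$ of characteristic zero: take $R_1=k[t^2]\subset S_1=k[t]$ and $R_2=k[t^2+t]\subset S_2=k[t]$. Both extensions are integral (degree two), yet $R_1\cap R_2=k$ while $S_1\cap S_2=k[t]$, and $k[t]$ is certainly not integral over $k$. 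So you cannot deduce that $\Int(E,D')=\bigcap_a\Int(\{a\},D')$ is integral over $\bigcap_a\Int(\{a\},D)=\Int(E,D)$ from the single-point statements alone. (Incidentally, that last intersection \emph{equals} $\Int(E,D)$, not merely contains it.)

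The paper's argument is a clean execution of your approach (a), and your worry about it dissolves once you choose the right product. Given $f\in\Int(E,D')$, for each $a\in E$ pick a monic $m_{f(a)}\in D[T]$ with $m_{f(a)}(f(a))=0$, and set $m(T)=\prod_{a\in E}m_{f(a)}(T)\in D[T]$. Then $m(f(X))\in K[X]$ vanishes at every $a\in E$ (each $m_{f(a)}$ divides $m$), so $m(f(X))\in\Int(E,D)$, and the monic polynomial $m(T)-m(f(X))\in\Int(E,D)[T]$ kills $f(X)$. The key is that you multiply the minimal polynomials of the \emph{values} $f(a)$ over $D$---whose coefficients already lie in $D\subset\Int(E,D)$---rather than the separate integral relations for $f$ over the various rings $\Int(\{a\},D)$. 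The only coefficient not automatically in $D$ is the adjusted constant term, and that lands in $\Int(E,D)$ precisely because it is zero on $E$.
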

\begin{proof}
By \cite[Proposition IV.4.1]{CaCh}, 
$\Int(E,D')$ is integrally closed. Conversely, take $f\in \Int(E,D')$. Then for each $a\in E$, there exists a monic polynomial $m_{f(a)}\in D[X]$ such that $m_{f(a)}(f(a))=0$. We consider the monic polynomial of $D[X]$ equal to the product of the $m_{f(a)}(X)$'s, as $a$ ranges through $E$. Then $m(f(X))$ is in $\Int(E,D)$, because for each $a\in E$ we have $m(f(a))=0\in D$. This gives a monic equation for $f(X)$ over $\Int(E,D)$. \end{proof}
\vskip0.4cm
\begin{Rem}\label{IntED pullback}
We recall now the following observation made in \cite{Boy}. Under the assumptions of Lemma \ref{IntED}, $\Int(E,D)$ is the pullback of $\prod_{i=1}^m D\subset \prod_{i=1}^m K$ with respect to the canonical mapping $\pi:K[X]\twoheadrightarrow \frac{K[X]}{p(X)K[X]}\cong\prod_{i=1}^m K$, where $p(X)=\prod_{a\in E}(X-a)$. The map $\pi$ is given by $f(X)\mapsto(f(a))_{a\in E}$. Notice also that $p(X)K[X]$ is an ideal of $\Int(E,D)$, because every polynomial of $K[X]$ which is divisible by $p(X)$ is zero on $E$. In particular, we have the following isomorphism of $D$-modules
\begin{equation*} 
\frac{\Int(E,D)}{p(X)K[X]}\cong\prod_{i=1}^m D
\end{equation*}
\end{Rem}

\subsection{Irreducible polynomial case}\label{irrpol}

We suppose now that $D$ is integrally closed and $p(X)$ is a monic irreducible polynomial in $D[X]$ of degree $n>0$. It is easy to see (see for example \cite{McA} or \cite[Proposition 11, Chapt. V]{Bourbaki}) that $p(X)$ is irreducible in $K[X]$, so that $p\in D[X]$ is also prime  and $D[X]/(p(X))\cong D[\alpha]$, where $\alpha$ is a root of $p(X)$ in $\overline{K}$. The next proposition follows by \cite[Prop. 3.1]{Per1} (which is proved in the case $D=\Z$). We sketch the proof for the sake of the reader, giving emphasis to the relevant points.

\begin{Prop}\label{intcloirr} Let $p\in D[X]$ be a monic and irreducible polynomial, with set of roots $\Omega_p\subset \overline{K}$. Let $F=K(\Omega_p)$ be the splitting field of $p(X)$ over $K$ and $D_F$ the integral closure of $D$ in $F$. For each $\alpha\in \Omega_p$ we set 
$$S_\alpha\doteqdot\Int_K(\{\alpha\},D_\alpha),$$
where $D_{\alpha}$ is the integral closure of $D$ in $K(\alpha)\subseteq F$.

Then, for each $\alpha\in \Omega_p$, $S_{\alpha}=\Int_K(\Omega_p,D_F)$ and this ring is the integral closure of $D(p)$. Moreover, $D(p)$ is integrally closed if and only if $D_{\alpha}=D[\alpha]$, for some (hence all) $\alpha\in\Omega_p$.
\end{Prop}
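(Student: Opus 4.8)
The plan is to establish the three assertions in sequence: first the equality $S_\alpha = \Int_K(\Omega_p, D_F)$ for every root $\alpha$, then the identification of this common ring as the integral closure of $D(p)$, and finally the criterion $D(p)$ integrally closed $\iff D_\alpha = D[\alpha]$. Throughout I would use that $p$ is irreducible over $K$ (so $D[X]/(p) \cong D[\alpha]$ is a domain), that the Galois group of the Galois closure of $F/K$ acts transitively on $\Omega_p$, and that $D_F$ is a finitely generated $D_\alpha$-module when $D$ is Noetherian — but since we only assume $D$ integrally closed, I would instead argue directly with integral dependence.

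\emph{Step 1: $S_\alpha = \Int_K(\Omega_p, D_F)$ for all $\alpha \in \Omega_p$.} Fix $f \in K[X]$. For a root $\alpha$, the value $f(\alpha)$ lies in $K(\alpha)$ and is integral over $D$ precisely when it lies in $D_\alpha$; thus $f \in S_\alpha$ iff $f(\alpha) \in D_\alpha$ iff $f(\alpha)$ is integral over $D$. Now if $\beta$ is any other root, there is a $K$-isomorphism $K(\alpha) \to K(\beta)$ sending $\alpha \mapsto \beta$, hence $f(\alpha) \mapsto f(\beta)$; since integrality over $D$ is preserved by $K$-isomorphisms, $f(\alpha)$ is integral over $D$ iff $f(\beta)$ is. Therefore membership in $S_\alpha$ does not depend on the choice of $\alpha$, and it is equivalent to $f(\beta)$ being integral over $D$ for every root $\beta$, i.e. to $f(\Omega_p) \subset \overline{D} \cap F = D_F$. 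This gives $S_\alpha = \Int_K(\Omega_p, D_F)$ for every $\alpha$.

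\emph{Step 2: $\Int_K(\Omega_p, D_F)$ is the integral closure of $D(p)$.} First, $D(p) \subseteq \Int_K(\Omega_p, D_F)$: by Lemma \ref{Dppullback}, $f \in D(p)$ means its remainder $r$ modulo $p$ lies in $D[X]$, and then $f(\alpha) = r(\alpha) \in D[\alpha] \subseteq D_F$ for each root. Next, integral closedness of $\Int_K(\Omega_p, D_F) = S_\alpha = \Int_K(\{\alpha\}, D_\alpha)$: by \cite[Proposition IV.4.1]{CaCh}, since $D_\alpha$ is integrally closed (it is an integral closure), $\Int_K(\{\alpha\}, D_\alpha)$ is integrally closed in $K(\alpha)[X]$, and one checks its integral closure in $K[X]$ is itself. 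Finally, every $f \in \Int_K(\Omega_p, D_F)$ is integral over $D(p)$: for each root $\alpha$, $f(\alpha)$ is integral over $D$, so pick a monic $m_\alpha \in D[X]$ with $m_\alpha(f(\alpha)) = 0$; since all roots of $p$ are conjugate, by Step 1 a single monic $m \in D[X]$ (e.g. the minimal polynomial of $f(\alpha)$ over $K$, which lies in $D[X]$ as $D$ is integrally closed) kills $f(\alpha)$ for every $\alpha$. Then $m(f(X))$ vanishes on $\Omega_p$, hence is divisible by $p(X)$ in $K[X]$, so $m(f(X)) \in p(X)K[X] \subseteq D(p)$. This is a monic equation for $f$ over $D(p)$.

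\emph{Step 3: the criterion.} By Lemma \ref{Dppullback}, $D(p)/p(X)K[X] \cong D[\alpha]$, and by the pullback description of $\Int_K(\Omega_p, D_F)$ (which shares the ideal $p(X)K[X]$ with $K[X]$, as polynomials divisible by $p$ vanish on $\Omega_p$), $\Int_K(\Omega_p, D_F)/p(X)K[X] \cong D_\alpha$ inside $K(\alpha) \cong K[X]/(p(X))$. Hence $D(p) = \Int_K(\Omega_p, D_F)$ iff $D[\alpha] = D_\alpha$. The main obstacle is Step 2: verifying carefully that the integral closure of $\Int_K(\{\alpha\}, D_\alpha)$ taken inside $K[X]$ (not $K(\alpha)[X]$) is still $\Int_K(\{\alpha\}, D_\alpha)$ — this requires knowing that an element of $K[X]$ integral over a ring of polynomials with coefficients in $K(\alpha)$ already has its values controlled, which is where the argument of \cite[Prop. 3.1]{Per1} does the real work; the conjugacy argument of Step 1 is what lets one descend from $K(\alpha)[X]$ back to $K[X]$ cleanly.
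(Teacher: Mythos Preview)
Your proof is correct and follows essentially the same outline as the paper's (Galois-invariance for Step~1, pullback comparison for Step~3). The only real difference is in Step~2: the paper establishes that $\Int_K(\Omega_p,D_F)$ is the integral closure of $D(p)$ via a pullback-diagram argument (observing that $D_\alpha$ is the integral closure of $D[\alpha]$ in $K(\alpha)$ and lifting this through the pullback, deferring details to \cite{Per1}), whereas you construct an explicit monic equation $m(f(X))\in p(X)K[X]$ using the minimal polynomial of $f(\alpha)$ over $K$. Your approach here is in fact the one the paper adopts later for the general (not necessarily irreducible) case in Lemma~\ref{pPp} and Theorem~\ref{Intclopull}, so you have anticipated that argument.

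Your flagged ``obstacle'' is not a genuine gap: the clean way to see that $\Int_K(\{\alpha\},D_\alpha)$ is integrally closed is to write it as $\Int(\{\alpha\},D_\alpha)\cap K[X]$, an intersection of two integrally closed domains inside $F(X)$ (the first by \cite[Proposition~IV.4.1]{CaCh}, the second because $K[X]$ is a UFD). This is exactly the contraction argument the paper uses in Remark~\ref{IntKOmegaD integ closed}, and it requires no descent from $K(\alpha)[X]$.
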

\begin{proof} Using a Galois-invariance argument it is easy to show that the ring $S_\alpha$ does not depend on the choice of the root $\alpha$ of $p(X)$ and is equal to $\Int_K(\Omega_p,D_F)$. We observe that $S_\alpha=\{f\in K[X]\,|\,f(\alpha)\textnormal{ is integral over }D\}$. Then for a polynomial $f\in S_\alpha$ and for every conjugate $\alpha'$ of $\alpha$ over $K$, $f(\alpha')$ is integral over $D$ as well. Since $D[\alpha]$, for $\alpha\in\Omega_p$, is a free $D$-module of rank $n$, we can show that 
$$D(p)=\{f\in K[X] \mid f(\alpha)\in D[\alpha]\}=\Int_K(\{\alpha\},D[\alpha]).$$
Finally, using a pullback diagram argument, since $D_{\alpha}$ is the integral closure of $D[\alpha]$ in $K(\alpha)$, we deduce that $\Int_K(\Omega_p,D_F)$ is the integral closure of $D(p)$ (see \cite[Proposition 3.1]{Per1} for the details). \end{proof}
\vskip0.2cm 
In particular, the proposition shows that all the subrings $\Int(\{\alpha\},D_F)\subset F[X]$, for $\alpha\in\Omega_p$, contracts in $K[X]$ to the same ring $S_\alpha$. Notice also that $\Int_K(\Omega_p,\olD)$ is equal to $\Int_K(\Omega_p,D_F)$, where $D_F$ is the integral closure of $D$ in the splitting field $F=K(\Omega_p)$ of $p(X)$ over $K$.

\vskip0.4cm
\section{Pullbacks and divided differences}\label{pullback div diff}

In this section we recall a result of \cite{Per2} which characterizes a polynomial $f(X)$ in a pullback $D(p)=D[X]+p(X)\cdot K[X]$ in terms of a finite set of conditions on the evaluation of the divided differences of $f(X)$ at the roots of $p(X)$ in $\overline{K}$. We use this result to show that the ring of integer-valued polynomials whose divided differences are also integer-valued can be represented as an intersection of such pullbacks. 

Given a polynomial $f\in K[X]$, the divided differences of $f(X)$ are defined recursively as follows: 
\begin{align*}
\Phi^{0}(f)(X_0)\doteqdot& f(X_0)\\
\Phi^{1}(f)(X_0,X_1)\doteqdot& \frac{f(X_0)-f(X_1)}{X_0-X_1}\\
\ldots\\
\Phi^{k}(f)(X_0,\ldots,X_k)\doteqdot&\frac{\Phi^{k-1}(f)(X_0,\ldots,X_{k-1})-\Phi^{k-1}(f)(X_0,\ldots,X_{k-2},X_k)}{X_{k-1}-X_{k}}\\
\end{align*}
For each $k\in\N$, $\Phi^{k}(f)$ is a symmetric polynomial over $K$ in $k+1$ variables (see \cite{EvFaJoh}, \cite{Per2}, \cite{Steff} and \cite{vdW1} for the main properties of the divided differences of a polynomial). We recall here that, given a finite sequence of elements $a_0,\ldots,a_n$ of a commutative ring $R$, and a polynomial $f\in R[X]$ of degree $\leq n$ we have the following expansion due to Newton:
\begin{align}\label{Newt}
f(X)=f(a_0)+\Phi^1(f)(a_0,a_1)(X-a_0)+\Phi^2(f)(a_0,a_1,a_2)(X-a_0)(X-a_1)+\ldots\nonumber\\
+\Phi^{n}(f)(a_0,\ldots,a_n)(X-a_0)\cdot\ldots\cdot(X-a_{n-1})
\end{align}
Since in general a polynomial may not have distinct roots, we need to recall the following definition.
\begin{Def}
A \textbf{multi-set} is a collection of elements $\Omega$ in which elements may occur multiple times. The number of times an element occurs is called its multiplicity in the multi-set. The cardinality of a multi-set $\Omega$ is defined as the number of elements of $\Omega$, each of them counted with multiplicity. The underlying set of $\Omega$ is the (proper) set containing the distinct elements in $\Omega$.\\
A multi-set $\Omega_1$ is a sub-multi-set of a multi-set $\Omega_2$ if every element $\alpha$ of $\Omega_1$ of multiplicity $n_1$ belongs to $\Omega_2$ with multiplicity $n_2\geq n_1$. 
\end{Def}

\begin{Rem}
Let $\Omega$ be a multi-set of cardinality $n$ and let $S$ be the underlying set of $\Omega$. The choice of an ordering on the elements of $\Omega$ corresponds to a $n$-tuple in $S^n$ (we have thus $n!$ choices). Conversely, given an $n$-tuple $\underline{s}$ in $S^n$, where $S$ is a set, if we do not consider the order its components, we have a multi-set $\Omega$ of cardinality $n$.\end{Rem}

\begin{Rem}\label{order div diff} A particular ring of integer-valued polynomials involving divided differences has been introduced by Bhargava in \cite{BG}. Given a subset $S$ of $D$ and $n\in\N$, we consider those polynomials $f(X)$ in $K[X]$ whose $k$-th divided difference $\Phi^k(f)$ is integer-valued on $S$ for all $k\in\{0,\ldots,n\}$, namely: 
$$\Int^{\{n\}}(S,D)\doteqdot\{f\in K[X] \mid \forall 0\leq  k\leq n,\,\Phi^k(f)(S^{k+1})\subset D\}.$$
For $n=0$ we recover the ring $\Int(S,D)$, which contains $\Int^{\{n\}}(S,D)$ for all $n\in\N$.

Given $f\in\Int^{\{n\}}(S,D)$ and $k\in\{0,\ldots,n\}$ we have:
\begin{equation}\label{*}\tag{*}
\forall (a_1,\ldots,a_{k+1})\in S^{k+1},\; \Phi^k(f)(a_1,\ldots,a_{k+1})\in D.
\end{equation}
Since $\Phi^k(f)$ is a symmetric polynomial in $k+1$ variables, for all permutations $\sigma
\in\mathcal{S}_{k+1}$ we have $\Phi^k(f)(a_1,\ldots,a_{k+1})=\Phi^k(f)(a_{\sigma(1)},\ldots,a_{\sigma(k+1)})$. Hence, we may disregard the order of the components of the chosen $(k+1)$-tuple. If we consider a multi-set $\Omega$ of cardinality $k+1$ formed by elements of $S$, we may define 
$\Phi^k(f)(\Omega)$ as the value of $\Phi^k(f)$ at one of the $(k+1)$-tuple associated to $\Omega$. Thus we choose an ordering of $\Omega$ and, by above, the value $\Phi^k(f)(\Omega)$ does not depend on the chosen ordering. Notice that $\Omega$ is not necessarily a sub-multi-set of $S$. We only require that the underlying set of $\Omega$ is contained in $S$. For example, if $S=\{1,2,3\}$ and $k=1$, we have $\{1,1\}$, $\{1,3\}$ and $\{2,2\}$ as possible choices for $\Omega$. 

We may rephrase the above property (\ref{*}) by saying that for all multi-sets $\Omega$ of cardinality $k+1$ such that the underlying set $\Omega'$ is contained in $S$, we have $\Phi^k(f)(\Omega)\in D$.  
\end{Rem}
\vskip0.2cm
\noindent\textsc{Notation.}
We fix now the notation for the rest of this section.
\begin{itemize}
 \item[-] $p(X)$ is a monic non-constant polynomial in $D[X]$ of degree $n$.
 \item[-] $\Omega_p=\{\alpha_1,\ldots,\alpha_n\}$ is the multi-set of roots of $p(X)$ in $\overline{K}$ (the $\alpha_i$'s are integral over $D$).
 \item[-] $F=K(\alpha_1,\ldots,\alpha_n)$ the splitting field of $p(X)$.
 \item[-] $D_F$ the integral closure of $D$ in $F$.
\end{itemize}
\vskip0.3cm
Given $f\in F[X]$, whenever we expand $f\in F[X]$ as in (\ref{Newt}) in terms of the roots $\Omega_p$ of $p(X)$, we implicitly assume that an order of $\Omega_p$ has been fixed (so we choose one of the $n!$ associated $n$-tuples). Changing the order of $\Omega_p$ will give a different expansion.

We need now the following preliminary lemma: the divided differences of a polynomial $p(X)$ are zero when they are evaluated at the roots of the polynomial $p(X)$ itself.

\begin{Lemma}\label{Phiproots}
For every sub-multi-set $\Omega$ of $\Omega_p$ of cardinality $k+1$, $k<n-1$, we have $\Phi^k(p)(\Omega)=0$, and $\Phi^{n-1}(p)(\Omega_p)=1$. Equivalently, we have:
$$\Phi^k(p)(\alpha_1,\ldots,\alpha_{k+1})=\left\{
\begin{array}{ccl}
0,&{\rm if   }& 0\leq k<n\\
1,&{\rm if   }& k=n
\end{array}\right.$$
for any possible choice of an ordering for $\Omega_p$.
\end{Lemma}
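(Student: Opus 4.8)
The plan is to derive both assertions directly from Newton's expansion (\ref{Newt}), applied to $p(X)$ itself. First, applying (\ref{Newt}) with $n=\deg(p)$ and comparing the coefficients of $X^{n}$ on the two sides shows that $\Phi^{n}(p)(a_{0},\ldots,a_{n})$ equals the leading coefficient of $p$, hence is $1$, for every choice of nodes $a_{0},\ldots,a_{n}$; in particular $\Phi^{n}(p)\equiv 1$, which accounts for the value $1$ in the statement (it is $\Phi^{n}(p)$ that is identically $1$; $\Phi^{n-1}(p)$ evaluated at the $n$ roots will instead come out $0$, as the argument below shows, in accordance with the ``Equivalently'' display).

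For the vanishing, I would fix an arbitrary ordering $\alpha_{1},\ldots,\alpha_{n}$ of the multi-set $\Omega_{p}$, adjoin one further arbitrary node $\alpha_{n+1}\in\overline{K}$ (for instance $\alpha_{n+1}=\alpha_{1}$), and expand $p$ by (\ref{Newt}) on the $n+1$ nodes $\alpha_{1},\ldots,\alpha_{n+1}$:
$$p(X)=\sum_{k=0}^{n}\Phi^{k}(p)(\alpha_{1},\ldots,\alpha_{k+1})\prod_{i=1}^{k}(X-\alpha_{i}).$$
Since $\Omega_{p}$ is exactly the multi-set of roots of $p$, we have $\prod_{i=1}^{n}(X-\alpha_{i})=p(X)$, and by the previous paragraph the $k=n$ summand equals $1\cdot p(X)=p(X)$. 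Subtracting it from both sides gives
$$0=\sum_{k=0}^{n-1}\Phi^{k}(p)(\alpha_{1},\ldots,\alpha_{k+1})\prod_{i=1}^{k}(X-\alpha_{i}).$$
The polynomials $\prod_{i=1}^{k}(X-\alpha_{i})$, for $k=0,\ldots,n-1$, are monic of the pairwise distinct degrees $0,1,\ldots,n-1$, hence $K$-linearly independent; so every coefficient must vanish, i.e. $\Phi^{k}(p)(\alpha_{1},\ldots,\alpha_{k+1})=0$ for $k=0,\ldots,n-1$.

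It remains to pass from this one (arbitrary) ordering to arbitrary sub-multi-sets. Given a sub-multi-set $\Omega$ of $\Omega_{p}$ of cardinality $k+1$, the definition of sub-multi-set lets us choose an ordering of $\Omega_{p}$ whose first $k+1$ entries form an ordering of $\Omega$; then, by the symmetry of $\Phi^{k}$ recalled in section~\ref{pullback div diff}, the value $\Phi^{k}(p)(\Omega)$ is one of the coefficients just shown to vanish, so $\Phi^{k}(p)(\Omega)=0$. I do not expect a genuine obstacle: all the displayed relations are polynomial identities valid over any commutative ring, so repeated roots cause no difficulty; the one point that deserves a word of care is exactly this last multi-set bookkeeping — that a sub-multi-set of cardinality $k+1$ can always be completed to an ordering of $\Omega_{p}$ — which holds because the multiplicities in $\Omega$ do not exceed those in $\Omega_{p}$.
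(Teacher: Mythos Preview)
Your proof is correct and follows essentially the same line as the paper's: expand $p$ via Newton's formula (\ref{Newt}) at its own roots, recognize the top term as $1\cdot\prod_{i=1}^{n}(X-\alpha_i)=p(X)$, and conclude that the remaining coefficients vanish by uniqueness/linear independence of the basis. The paper's version is terser (it simply says ``since $p(X)$ is divisible by itself, all the other coefficients in this expansion are zero''), whereas you spell out the linear-independence step and the sub-multi-set bookkeeping more carefully; your aside about $\Phi^{n}$ versus $\Phi^{n-1}$ also correctly flags a small inconsistency between the lemma's first sentence and its ``Equivalently'' display.
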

\begin{proof} We fix an ordering for $\Omega_p$. We consider the Newton expansion of $p(X)$ over $F$ with respect to $\Omega_p$ up to the order $n$ ($p(X)$ is split over $F$). The coefficients of this expansion are exactly $\{\Phi^k(p)(\alpha_1,\ldots,\alpha_{k+1})\}_{0\leq k\leq n}$, where for $k=n$ we have the leading coefficient of $p(X)$ which is $1$. Since $p(X)$ is divisible by itself, all the other coefficients in this expansion are zero. Obviously, the result does not depend on the chosen ordering for $\Omega_p$. \end{proof}

\begin{Lemma}\label{Phifr}
Let $f\in K[X]$ and let $r\in K[X]$ be the unique remainder in the division of $f(X)$ by $p(X)$ in $K[X]$. If $r\not=0$, let $m<n$ be the degree of $r(X)$. Then over $F[X]$ we have
\begin{equation}\label{restoNewt}
r(X)=f(\alpha_1)+\Phi^1(f)(\alpha_1,\alpha_2)\cdot (X-\alpha_1)+\ldots+\Phi^{m}(f)(\alpha_1,\ldots,\alpha_{m+1})\prod_{i=1}^{m}(X-\alpha_i)
\end{equation}
which is the Newton expansion of $r(X)$ with respect to $\Omega_p=\{\alpha_1,\dots,\alpha_n\}$.
\end{Lemma}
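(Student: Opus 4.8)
The plan is to reduce the identity (\ref{restoNewt}) to the equalities of values
$\Phi^k(f)(\alpha_1,\dots,\alpha_{k+1})=\Phi^k(r)(\alpha_1,\dots,\alpha_{k+1})$ for $0\le k\le m$. Granting these, the conclusion is immediate: since $\deg r=m<n$, Newton's formula (\ref{Newt}) applied to $r$ itself with respect to the nodes $\alpha_1,\dots,\alpha_{m+1}$ (all of which lie in $\Omega_p$, as $m+1\le n$) gives $r(X)=\sum_{k=0}^{m}\Phi^k(r)(\alpha_1,\dots,\alpha_{k+1})\prod_{i=1}^{k}(X-\alpha_i)$; replacing each coefficient $\Phi^k(r)(\alpha_1,\dots,\alpha_{k+1})$ by $\Phi^k(f)(\alpha_1,\dots,\alpha_{k+1})$ yields precisely the right-hand side of (\ref{restoNewt}), and the same computation shows that this right-hand side \emph{is} the Newton expansion of $r(X)$ with respect to $\Omega_p$.

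For the equalities of values, write $f(X)=r(X)+p(X)q(X)$ with $q\in K[X]$ (Euclidean division in $K[X]$). Divided differences are linear in the function, directly from the recursive definition, so $\Phi^k(f)=\Phi^k(r)+\Phi^k(pq)$, and it suffices to show $\Phi^k(pq)(\alpha_1,\dots,\alpha_{k+1})=0$ for every $k\le m<n$. Here I would use the Leibniz rule for divided differences (valid over any commutative ring; see e.g. \cite{Steff}), $\Phi^k(gh)(x_0,\dots,x_k)=\sum_{j=0}^{k}\Phi^j(g)(x_0,\dots,x_j)\,\Phi^{k-j}(h)(x_j,\dots,x_k)$, with $g=p$, $h=q$ and $(x_0,\dots,x_k)=(\alpha_1,\dots,\alpha_{k+1})$: each summand carries the factor $\Phi^j(p)(\alpha_1,\dots,\alpha_{j+1})$ with $0\le j\le k<n$, and all these vanish by Lemma \ref{Phiproots}. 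The only subtlety is the extremal index $j=k=n-1$, occurring when $m=n-1$: one needs $\Phi^{n-1}(p)(\alpha_1,\dots,\alpha_n)=0$, that is, the assertion of Lemma \ref{Phiproots} in the range $0\le k<n$ (and not the evaluation of the top divided difference $\Phi^{n}(p)$).

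Alternatively, (\ref{restoNewt}) can be proved using only (\ref{Newt}): if $\deg f<n$ then $r=f$ and (\ref{restoNewt}) is (\ref{Newt}); if $N=\deg f\ge n$, expand $f$ over $F[X]$ by (\ref{Newt}) with the $N+1$ nodes $\alpha_1,\dots,\alpha_n,a_n,\dots,a_N$ for arbitrary $a_n,\dots,a_N\in F$. Each term of index $k\ge n$ is divisible by $\prod_{i=1}^{n}(X-\alpha_i)=p(X)$, so reduction modulo $p(X)$ leaves the polynomial $\sum_{k=0}^{n-1}\Phi^k(f)(\alpha_1,\dots,\alpha_{k+1})\prod_{i=1}^{k}(X-\alpha_i)$ of degree $<n$; by uniqueness of the remainder in Euclidean division over $F[X]$ it equals $r(X)$, and since the $\prod_{i=1}^{k}(X-\alpha_i)$ form a basis of the polynomials of degree $<n$, the coefficients of index $>m$ vanish, giving (\ref{restoNewt}). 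In both approaches the only real care is with degrees — the edge case $m=n-1$ in the first, and checking that the first $n$ divided differences of $f$ do not involve the padding nodes in the second — so the proof should be short; this bookkeeping is the main (mild) obstacle.
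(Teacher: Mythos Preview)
Your primary argument is exactly the paper's: write $f=r+pq$, use linearity and the Leibniz rule for divided differences, and invoke Lemma~\ref{Phiproots} to kill each factor $\Phi^{j}(p)(\alpha_1,\dots,\alpha_{j+1})$ for $0\le j\le k<n$, yielding (\ref{Phifp}) and hence (\ref{restoNewt}) via Newton's expansion of $r$. Your flag about the edge case $j=k=n-1$ is well placed---note that the first sentence of Lemma~\ref{Phiproots} contains an index slip (it should read $\Phi^{n}(p)(\Omega_p)=1$, not $\Phi^{n-1}$), while the displayed formula and its proof are correct and give exactly the vanishing you need.

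Your alternative route---pad the node list to length $\deg f+1$, expand $f$ by (\ref{Newt}), and observe that every term of index $\ge n$ carries the factor $p(X)$---is a genuinely different and slightly cleaner argument: it avoids the Leibniz rule entirely and reduces the lemma to the uniqueness of Euclidean remainder over $F[X]$. The paper's approach has the advantage of isolating the identity (\ref{Phifp}) for all $0\le k<n$ (not just $k\le m$), which is reused verbatim in the proof of Proposition~\ref{charpullback}; your alternative gives the same identity implicitly (the coefficients of index $m<k<n$ vanish because the $\prod_{i\le k}(X-\alpha_i)$ form a basis), so nothing is lost either way.
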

\begin{proof} If $f(X)=q(X)p(X)+r(X)$, by linearity of the divided difference operator, we have $\Phi^k(f)=\Phi^k(r)+\Phi^k(p\cdot q)$, for all $k\in\N$. Moreover, by the so-called Leibniz rule for divided differences (see for example \cite{Steff}), we have $\Phi^k(p\cdot q)=\sum_{i=0,\ldots,k}\Phi^i(p)\Phi^{k-i}(q)$ (we omit the variables). By Lemma \ref{Phiproots}, for $0\leq k<n$, we get that
\begin{equation}\label{Phifp}
\Phi^k(f)(\alpha_{1},\ldots,\alpha_{k+1})=\Phi^k(r)(\alpha_{1},\ldots,\alpha_{k+1}).
\end{equation}
Notice that, for $k=m$ the above value is the leading coefficient of $r(X)$, and for $m<k<n$ it is zero. Because of the last formula, $r(X)$ has the desired expansion over $F[X]$. \end{proof}

By means of Lemma \ref{Phiproots} and Lemma \ref{Phifr} we give a new proof of \cite[Proposition 4.1]{Per2}, which says that a polynomial $f(X)$ of $K[X]$ is in $D(p)$ if and only if the divided differences of $f(X)$ up to the order $n-1$ are integral on every sub-multi-set of the multi-set $\Omega_p$ of the roots of $p(X)$. 

\begin{Prop}\label{charpullback}
Let $D$ be an integrally closed domain with quotient field $K$. Let $f\in K[X]$ and $p\in D[X]$ monic of degree $n$. Let $\Omega_p=\{\alpha_1,\ldots,\alpha_n\}$ be the multi-set of roots of $p(X)$ in a splitting field $F$ over $K$. Then the following are equivalent:
\begin{itemize}
 \item[i)] $f\in D(p)$.
 \item[ii)] for all $0\leq k<n$, $\Phi^k(f)(\alpha_{1},\ldots,\alpha_{k+1})\in D[\alpha_{1},\ldots,\alpha_{k+1}]$.
 \item[iii)] for all $0\leq k<n$, $\Phi^k(f)(\alpha_{1},\ldots,\alpha_{k+1})\in D_F$.
\end{itemize}
\end{Prop}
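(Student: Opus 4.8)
The plan is to prove the three conditions equivalent by running through the cycle of implications $i)\Rightarrow ii)\Rightarrow iii)\Rightarrow i)$.

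For $i)\Rightarrow ii)$, assume $f\in D(p)$. By Lemma \ref{Dppullback} the remainder $r(X)$ of the division of $f(X)$ by $p(X)$ in $K[X]$ lies in $D[X]$, and by the identity (\ref{Phifp}) obtained inside the proof of Lemma \ref{Phifr} we have $\Phi^k(f)(\alpha_1,\ldots,\alpha_{k+1})=\Phi^k(r)(\alpha_1,\ldots,\alpha_{k+1})$ for every $0\leq k<n$. It then remains to observe that $\Phi^k$ maps $D[X]$ into $D[X_0,\ldots,X_k]$: by $D$-linearity this reduces to a monomial $X^j$, for which $\Phi^k(X^j)$ is the complete homogeneous symmetric polynomial of degree $j-k$ in $X_0,\ldots,X_k$ (and is $0$ when $j<k$), hence has nonnegative integer coefficients, so coefficients in $D$; alternatively one argues by induction on $k$ using that the one-variable divided difference preserves $D[X_0,\ldots,X_{k-1}]$. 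Evaluating $\Phi^k(r)\in D[X_0,\ldots,X_k]$ at $(\alpha_1,\ldots,\alpha_{k+1})$ therefore lands in $D[\alpha_1,\ldots,\alpha_{k+1}]$, which is $ii)$. The step $ii)\Rightarrow iii)$ is immediate: each $\alpha_i$ belongs to $F$ and is integral over $D$ (being a root of the monic polynomial $p\in D[X]$), so $D[\alpha_1,\ldots,\alpha_{k+1}]\subseteq D_F$.

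The core of the statement is $iii)\Rightarrow i)$. Write $r(X)$ again for the remainder of $f(X)$ modulo $p(X)$ in $K[X]$; if $r=0$ then $f\in p(X)K[X]\subseteq D(p)$ and we are done, so assume $r\neq 0$ with $m=\deg r<n$. By Lemma \ref{Phifr}, over $F[X]$ we have the Newton expansion (\ref{restoNewt}),
\begin{equation*}
r(X)=\sum_{k=0}^{m}\Phi^k(f)(\alpha_1,\ldots,\alpha_{k+1})\prod_{i=1}^{k}(X-\alpha_i).
\end{equation*}
Each factor $\prod_{i=1}^{k}(X-\alpha_i)$ has all its coefficients in $D_F$ (they are, up to sign, the elementary symmetric functions of the integral elements $\alpha_1,\ldots,\alpha_k$), and by hypothesis $iii)$ every scalar $\Phi^k(f)(\alpha_1,\ldots,\alpha_{k+1})$ lies in $D_F$; hence $r(X)\in D_F[X]$. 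On the other hand $r(X)\in K[X]$ by construction, so its coefficients lie in $D_F\cap K$. Since $D_F$ is integral over $D$, so is $D_F\cap K$, and since $D$ is integrally closed in $K$ this forces $D_F\cap K=D$; thus $r\in D[X]$, and Lemma \ref{Dppullback} yields $f\in D(p)$, which closes the cycle.

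I expect the only genuinely delicate point to be this final descent: from the integrality over $D$ of the ``Newton data'' of $f$ at the roots of $p(X)$ one extracts that the $F$-coefficients of the remainder $r(X)$ are integral over $D$, and then the hypothesis that $D$ is integrally closed in $K$ is exactly what is needed to bring $r$ back into $D[X]$. The remaining implications are bookkeeping built on Lemmas \ref{Phiproots} and \ref{Phifr}.
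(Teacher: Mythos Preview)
Your proof is correct and follows the same cycle of implications as the paper, relying on Lemma~\ref{Phifr} and the identity~(\ref{Phifp}) in exactly the same way. The only difference is cosmetic: in the step $iii)\Rightarrow i)$ the paper descends coefficient by coefficient from the top of $r(X)$, whereas you observe in one stroke that $r\in D_F[X]\cap K[X]=D[X]$; your version is in fact slightly cleaner.
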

\begin{proof} If i) holds, let $f(X)=r(X)+p(X)q(X)$, for some $q\in K[X]$, $r\in D[X]$, $\deg(r)<n$ or $r=0$. In particular, the divided differences of $r(X)$ are polynomials with coefficients in $D$. By (\ref{Phifp}), $\Phi^k(f)(\alpha_{1},\ldots,\alpha_{k+1})=\Phi^k(r)(\alpha_{1},\ldots,\alpha_{k+1})\in D[\alpha_{1},\ldots,\alpha_{k+1}]$, for all the relevant $k$'s. Hence, i)$\Rightarrow$ ii).

Obviously ii)$\Rightarrow$ iii), since the roots of $p(X)$ are integral over $D$, so that $D[\alpha_{1},\ldots,\alpha_{k+1}]\subseteq D_F$.

Suppose now that iii) holds. We have to prove that the remainder $r(X)$ of the Euclidean division in $K[X]$ of $f(X)$ by $p(X)$ is in $D[X]$. Let $m<n$ be the degree of $r(X)$. Consider the Newton expansion of $r(X)$ with respect to $\Omega_p$ over $F[X]$ as in Lemma \ref{Phifr} (see (\ref{restoNewt})). By assumption, the coefficients $\{\Phi^k(f)(\alpha_{1},\ldots,\alpha_{k+1})\}_{k=0,\ldots,m}$ of this expansion are in $D_F$. The leading coefficient of $r(X)$ is equal to $\Phi^m(f)(\alpha_{1},\ldots,\alpha_{m+1})$, so that it is in $D_F\cap K=D$ (we use here the assumption that $D$ is integrally closed).  The coefficient $c_{m-1}$ of the term $X^{m-1}$ of $r(X)$ is $\Phi^{m-1}(f)(\alpha_1,\ldots,\alpha_{m})\pm(\sum_{i=1,\ldots,m}\alpha_i)\cdot\Phi^{m}(f)(\alpha_1,\ldots,\alpha_{m+1})$ which is in $D_F$, so $c_{m-1}$ is in $K\cap D_F=D$. If we continue in this way we prove that $r(X)$ is in $D[X]$, which gives i). \end{proof}

\begin{Rem}\label{pullback div diff submulti}
If we choose another ordering on the multi-set $\Omega_p$ of roots of $p(X)$ we have other conditions of integrality on the values of the divided differences of a polynomial $f\in D(p)$ at the vectors of elements in $\Omega_p$. Since condition i) of Proposition \ref{charpullback} does not depend on the order we choose on $\Omega_p$, the above conditions are also equivalent to this one: 
\begin{center}
ii') for all $0\leq k<n$, and for every sub-multi-set $\Omega$ of $\Omega_p$ of cardinality $k+1$, $\Phi^k(f)(\Omega)\in D[\Omega]$,
\end{center}
that is, $\Phi^k(f)$ is integral-valued on $\Omega$: $\Phi^k(f)(\Omega)\in D_F$ (see also \cite[Proposition 4.1 \& Remark 4.1]{Per2}).

Note that, if $p\in D[X]$ is a monic polynomial of degree $n$ which is split over $D$, that is, $p(X)=\prod_{i=1}^n(X-a_i)$, $a_i\in D$, then condition i) and ii) are equivalent without the assumption that $D$ is integrally closed (this follows immediately from the formula (\ref{restoNewt})). In particular, condition ii) becomes: for all $0\leq k<n$, $\Phi^k(f)(a_{1},\ldots,a_{k+1})\in D$. We have thus in this case found again the result of \cite[Proposition 11]{EvFaJoh} (see also \cite[Lemma 2.2 \& Remark 2.1]{Per2}).
\end{Rem}

Now we give the link between the ring of integer-valued polynomials whose divided differences are also integer-valued introduced by Bhargava and the polynomial pullbacks $D(p)$ we are working with. 

We observe first that, if $p\in D[X]$ is a monic polynomial of degree $n$ which is split over $D$ (i.e.: the set of roots $\Omega_p$ is contained in $D$), then $\Int^{\{n-1\}}(\Omega_p,D_F)$ may be strictly contained in $D(p)$.  

\begin{Ex}\label{Esempio}
Let $n=2$, $\Omega=\{1,3\}\subset\Z$ and $p(X)=(X-1)(X-3)$. Let $f(X)=p(X)/3\in \Z(p)$. We have that $\Phi^1(f)(1,1)=-2/3$, so that $f\notin\Int^{\{1\}}(\Omega,\Z)$.\\
Indeed, by Proposition \ref{charpullback}, given any $f\in \Z(p)$, $\Phi^1(f)$ is integer-valued over $\{(1,3),(3,1)\}\subsetneq \Omega^2$.
\end{Ex}
We need to introduce another notation before the next theorem.
\vskip0.3cm
\noindent\textsc{Notation.}
Let $\Omega$ be a subset of $D$ and let $n$ be a positive integer. We denote by $\mathcal{P}_n(\Omega)$ the set of monic polynomials $q(X)$ over $D$ of degree $n$ whose set of roots is contained in $\Omega$ (so, in particular, they are split over $D$).

\begin{Th}\label{Int div diff pullback}
Let $\Omega\subseteq D$ and $n\in\N$. Then
$$\Int^{\{n-1\}}(\Omega,D)=\bigcap_{q\in \mathcal{P}_n(\Omega)}D(q).$$
\end{Th}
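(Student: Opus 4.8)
The plan is to prove the two inclusions separately, using the characterization of $D(q)$ from Proposition \ref{charpullback} together with the multi-set reformulation in Remark \ref{pullback div diff submulti}. For the inclusion $\Int^{\{n-1\}}(\Omega,D)\subseteq\bigcap_{q\in\mathcal{P}_n(\Omega)}D(q)$, fix $f\in\Int^{\{n-1\}}(\Omega,D)$ and an arbitrary $q\in\mathcal{P}_n(\Omega)$. Since $q$ is monic of degree $n$ and split over $D$ with roots $\Omega_q=\{a_1,\ldots,a_n\}$ (a multi-set whose underlying set lies in $\Omega$), Remark \ref{pullback div diff submulti} tells us that $f\in D(q)$ is equivalent to: for all $0\le k<n$ and every sub-multi-set of $\Omega_q$ of cardinality $k+1$, the value of $\Phi^k(f)$ on that sub-multi-set lies in $D$. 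But any sub-multi-set of $\Omega_q$ of cardinality $k+1$ has underlying set contained in $\Omega\subseteq D$, so by the reformulated property in Remark \ref{order div diff} (the last sentence there), membership $f\in\Int^{\{n-1\}}(\Omega,D)$ gives exactly $\Phi^k(f)(\Omega)\in D$ for every such multi-set. Hence $f\in D(q)$, and since $q$ was arbitrary, $f$ lies in the intersection. Note this direction does not need $D$ integrally closed, since $q$ is split over $D$ (the parenthetical remark in Remark \ref{pullback div diff submulti}).

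For the reverse inclusion, take $f\in\bigcap_{q\in\mathcal{P}_n(\Omega)}D(q)$; we must show that for every $0\le k\le n-1$ and every $(a_1,\ldots,a_{k+1})\in\Omega^{k+1}$ we have $\Phi^k(f)(a_1,\ldots,a_{k+1})\in D$. Given such a tuple, the key idea is to build a polynomial $q\in\mathcal{P}_n(\Omega)$ whose multi-set of roots $\Omega_q$ contains $\{a_1,\ldots,a_{k+1}\}$ as a sub-multi-set: simply take $q(X)=(X-a_1)\cdots(X-a_{k+1})\cdot(X-b_1)\cdots(X-b_{n-k-1})$ where the $b_j$ are any elements of $\Omega$ (for instance all equal to $a_1$), so that $q$ is monic of degree $n$, split over $D$, with roots in $\Omega$, i.e. $q\in\mathcal{P}_n(\Omega)$. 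Since $f\in D(q)$, Remark \ref{pullback div diff submulti} (condition ii', in its split form, which for split $q$ reads $\Phi^k(f)$ evaluated on sub-multi-sets of $\Omega_q$ lies in $D$) applied to the sub-multi-set $\{a_1,\ldots,a_{k+1}\}$ of $\Omega_q$ of cardinality $k+1\le n$ yields $\Phi^k(f)(a_1,\ldots,a_{k+1})\in D$, using that $\Phi^k(f)$ is symmetric so the ordering is irrelevant. Ranging over all $k$ and all tuples gives $f\in\Int^{\{n-1\}}(\Omega,D)$.

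The main point requiring care — and the step I expect to be the only genuine obstacle — is the bookkeeping in the reverse inclusion: one must check that for a given $k+1\le n$ and a given tuple in $\Omega^{k+1}$, the completion to a degree-$n$ split polynomial $q$ indeed lands in $\mathcal{P}_n(\Omega)$ (it does, trivially, since we only adjoin extra roots from $\Omega$), and that Proposition \ref{charpullback}/Remark \ref{pullback div diff submulti} genuinely delivers integrality of $\Phi^k(f)$ on \emph{every} sub-multi-set of the roots of $q$ of cardinality $\le n$, not merely on certain orderings of the full root multi-set. This is exactly the content of condition ii') in Remark \ref{pullback div diff submulti}, combined with the observation (also in that remark) that for a $q$ split over $D$ no integral-closedness hypothesis is needed. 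One should also note the boundary case $k+1=n$, which is covered since ii') ranges over all $0\le k<n$. A brief remark that the theorem recovers, for $n=1$, the tautology $\Int(\Omega,D)=D(X-a)$ intersected over $a\in\Omega$, i.e. $\Int^{\{0\}}(\Omega,D)=\Int(\Omega,D)$, can be included as a sanity check but is not needed for the proof.
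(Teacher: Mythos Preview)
Your proposal is correct and follows essentially the same route as the paper: both directions are proved via Proposition~\ref{charpullback} and the multi-set reformulation in Remark~\ref{pullback div diff submulti}, with the reverse inclusion handled by padding a given $(k+1)$-tuple from $\Omega$ to a degree-$n$ split polynomial in $\mathcal{P}_n(\Omega)$. Your write-up is somewhat more explicit about the bookkeeping (the padding construction, the split case not requiring integral closedness, the boundary case $k+1=n$), but the argument is the paper's.
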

\begin{proof} $(\subseteq)$. Let $f\in \Int^{\{n-1\}}(\Omega,D)$ and let $q\in \mathcal{P}_n(\Omega)$. Since for all $0\leq k<n$ we have $\Phi^k(f)(\Omega^{k+1})\subset D$, then for each sub-multi-set $\{a_1,\ldots,a_{k+1}\}$ of  $\Omega_q$ of cardinality $k+1$  we have $\Phi^k(f)(a_1,\ldots,a_{k+1})\in D$. Then by Proposition \ref{charpullback} (see also Remark \ref{pullback div diff submulti}), we have that $f\in D(q)$.

$(\supseteq)$. Let $f\in D(q)$, for all $q\in \mathcal{P}_n(\Omega)$. Let $k\in\{0,\ldots,n-1\}$ and let $(a_1,\ldots,a_{k+1})\in\Omega^{k+1}$. We consider a polynomial $q\in\mathcal{P}_n(\Omega)$ such that the multi-set $\{a_1,\ldots,a_{k+1}\}$ is a sub-multi-set of the multi-set of roots $\Omega_q$ (that is, $\prod_{i=1}^{k+1}(X-a_i)$ divides $q(X)$). Then by Proposition \ref{charpullback}, condition ii), $\Phi^k(f)(a_1,\ldots,a_{k+1})\in D$ (see also Remark \ref{pullback div diff submulti}, condition ii')). Since $(a_1,\ldots,a_{k+1})$ was chosen arbitrarily, $f(X)$ is in $\Int^{\{n-1\}}(\Omega,D)$.
\end{proof}
\vskip0.2cm
In the Example \ref{Esempio} above, we have that $f(X)=\frac{q(X)}{3}-\frac{2}{3}(X-1)$ is not in $\Z(q)$, where $q(X)=(X-1)^2$ is a polynomial in $\mathcal{P}_2(\Omega)=\{(X-1)(X-3),(X-1)^2,(X-3)^2\}$. 
\vskip0.5cm
\begin{Rem}\label{remark on Intdivdiff intersection pullbacks}
By \cite[Lemma 5.1]{Per2}, given a monic polynomial $p\in D[X]$ of degree $n$ which is split over $D$, the pullback ring $D(p)$ is equal to $\Int_K(T_n^p(D),M_n(D))$, where $T_n^p(D)$ is the set of $n\times n$ triangular matrices with characteristic polynomial equal to $p(X)$. In particular, we have this representation for the ring of integer-valued polynomials over the algebra of $n\times n$ triangular matrices over $D$:
\begin{equation}\label{IntTnD pullbacks}
\Int_K(T_n(D))=\bigcap_{p\in\mathcal{P}_n^s(D)}D(p)
\end{equation}
where $\mathcal{P}_n^s(D)$ is the set of monic polynomials over $D$ of degree $n$ which are split over $D$; as we mentioned in the introduction, a similar result holds for $\Int_K(M_n(D))$, see \cite{Per2}. We note that this gives a positive answer to \cite[Question 31]{PerWer} for the algebra $T_n(D)$. Similarly, given any subset $\mathcal{P}$ of $\mathcal{P}_n^s(D)$, the intersection of the pullbacks $D(p)$ as $p(X)$ ranges through $\mathcal{P}$ is the ring of polynomials which are integer-valued over the set of triangular matrices whose characteristic polynomial belongs to $\mathcal{P}$. By Theorem \ref{Int div diff pullback}, this ring is equal to $\Int^{\{n-1\}}(\Omega,D)$, where $\Omega\subseteq D$ is the set of roots of the polynomials in $\mathcal{P}$.

In the case $\Omega=D$, \cite[Theorem 16]{EvFaJoh} proves that $\Int^{\{n-1\}}(D)=\Int_K(T_n(D))$, which by (\ref{IntTnD pullbacks}) is also equal to the intersection of the pullbacks $D(p)$, as $p(X)$ ranges through $\mathcal{P}_n^s(D)$. Therefore, Theorem \ref{Int div diff pullback} generalizes this result to any subset $\Omega$ of $D$.
\end{Rem}

\section{Integral closure of polynomial pullbacks}\label{Int clos D(p)}
\vskip0.2cm
\begin{Rem}\label{IntKOmegaD integ closed}
Let $\Omega\subset\overline{K}$ be a finite set. Let $F=K(\Omega)$ and let $D_F$ be the integral closure of $D$ in $F$. By \cite[Proposition IV.4.1]{CaCh}, $\Int(\Omega,D_F)$ is integrally closed. Hence, $\Int_K(\Omega,D_F)=\Int(\Omega,D_F)\cap K[X]$ is integrally closed, too. The same remark was used in \cite[Proposition 7]{PerWer}.
\end{Rem}

\begin{Lemma}\label{pPp}
Let $D$ be an integrally closed domain. Let $p\in D[X]$ be a non-constant polynomial of degree $n$ and $\Omega_p\subset\overline{K}$ the multi-set of its roots. Let $f\in K[X]$ be integral-valued over $\Omega_p$, that is, $f\in\Int_K(\Omega_p,\overline D)$. Then the polynomial
$$P(X)=P_{f,p}(X)\doteqdot\prod_{\alpha\in\Omega_p}(X-f(\alpha))$$
is in $D[X]$.  Moreover, $P(f(X))$ is divisible by $p(X)$ in $K[X]$.
\end{Lemma}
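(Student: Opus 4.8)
The statement splits into two parts, of which the first (that $P(X)\in D[X]$) is the main one. Write the multi-set of roots as $\Omega_p=\{\alpha_1,\dots,\alpha_n\}$, so that the coefficients of $P(X)=\prod_{i=1}^n(X-f(\alpha_i))$ are, up to sign, the elementary symmetric functions $e_k(f(\alpha_1),\dots,f(\alpha_n))$ for $k=1,\dots,n$. My plan is to show first that each such coefficient lies in $K$, then that it is integral over $D$, and finally to invoke the hypothesis that $D$ is integrally closed to conclude that it lies in $D$.

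For the first point: since $f\in K[X]$, the expression $e_k(f(Y_1),\dots,f(Y_n))$ is a symmetric polynomial in the indeterminates $Y_1,\dots,Y_n$ with coefficients in $K$, so by the fundamental theorem of symmetric polynomials it equals $Q_k\bigl(e_1(Y_1,\dots,Y_n),\dots,e_n(Y_1,\dots,Y_n)\bigr)$ for some $Q_k$ with coefficients in $K$. Specializing $Y_i\mapsto\alpha_i$ and recalling that each $e_j(\alpha_1,\dots,\alpha_n)$ is, up to sign, a coefficient of $p(X)$, hence an element of $D$, one gets $e_k(f(\alpha_1),\dots,f(\alpha_n))\in K$. (Equivalently, since $p$ is monic, $P(X)=\mathrm{Res}_Y\bigl(p(Y),X-f(Y)\bigr)$, a resultant whose coefficients are $\Z$-polynomial expressions in the coefficients of $p$ and of $f$, hence lie in $K$.) For the second point, each $f(\alpha_i)$ is integral over $D$ by hypothesis, i.e. lies in $\overline{D}$; since $\overline{D}$ is a ring, the symmetric functions $e_k(f(\alpha_1),\dots,f(\alpha_n))$ lie in $\overline{D}$ as well. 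Combining the two, each coefficient of $P(X)$ lies in $\overline{D}\cap K=D$, so $P(X)\in D[X]$.

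For the divisibility assertion, substitute $X\mapsto f(X)$ to get $P(f(X))=\prod_{\alpha\in\Omega_p}\bigl(f(X)-f(\alpha)\bigr)$. Let $\beta_1,\dots,\beta_r$ be the distinct roots of $p$ with multiplicities $m_1,\dots,m_r$, so that $p(X)=\prod_{j=1}^r(X-\beta_j)^{m_j}$ and $P(f(X))=\prod_{j=1}^r\bigl(f(X)-f(\beta_j)\bigr)^{m_j}$ in $\overline{K}[X]$. Each factor $f(X)-f(\beta_j)$ vanishes at $X=\beta_j$, hence is divisible by $X-\beta_j$ in $\overline{K}[X]$; as the $X-\beta_j$ are pairwise coprime, $p(X)$ divides $P(f(X))$ in $\overline{K}[X]$. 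Finally, performing the Euclidean division of $P(f(X))$ by the monic polynomial $p(X)$ inside $K[X]$ and using that this division is unchanged over the larger ring $\overline{K}[X]$, the remainder must vanish; hence $p(X)$ divides $P(f(X))$ already in $K[X]$.

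The only delicate step is showing that the coefficients of $P(X)$ land in $K$ and not merely in the splitting field of $p$. A Galois-invariance argument — the multi-set $\{f(\alpha_1),\dots,f(\alpha_n)\}$ is stable under the $K$-automorphisms of $\overline{K}$ — suffices when $p$ is separable, but in positive characteristic it only places the coefficients in the purely inseparable closure of $K$; the symmetric-function (equivalently, resultant) argument above avoids this issue and works for an arbitrary monic $p$. The integral-closedness of $D$ then does the final work of upgrading ``in $K$ and integral over $D$'' to ``in $D$''.
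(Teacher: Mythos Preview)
Your proof is correct, but the route differs from the paper's. To show $P(X)\in K[X]$, the paper takes the companion matrix $M\in M_n(K)$ of the monic polynomial $p(X)/\mathrm{lc}(p)$ and invokes the fact that the characteristic polynomial of $f(M)\in M_n(K)$ is precisely $\prod_{\alpha\in\Omega_p}(X-f(\alpha))$; since $f(M)$ has entries in $K$, its characteristic polynomial automatically lies in $K[X]$. You instead argue directly via the fundamental theorem of symmetric polynomials (or the resultant), which is more elementary and self-contained and, as you observe, sidesteps any separability issues cleanly. Both proofs finish identically: the coefficients lie in $\overline{D}$ because the $f(\alpha_i)$ do, hence in $\overline{D}\cap K=D$. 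For the divisibility claim the paper is slightly more direct: from $(X-\alpha_i)\mid(f(X)-f(\alpha_i))$ for each $i$ it simply multiplies over the multi-set $\Omega_p$ to get $p(X)\mid P(f(X))$ in $F[X]$, with no need to group by distinct roots or appeal to coprimality; your argument is of course also valid. One small slip: you twice invoke ``since $p$ is monic,'' but the lemma as stated only assumes $p$ non-constant; the symmetric-function argument still goes through (the $e_j(\alpha_1,\dots,\alpha_n)$ lie in $K$ because $\mathrm{lc}(p)\in D\subset K$ is a unit in $K$), and Euclidean division in $K[X]$ works for any nonzero divisor, so the fix is cosmetic.
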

\begin{proof} Notice that $P(X)$ has degree $n$, because the product is over the elements of the multi-set $\Omega_p$. We set $g(X)\doteqdot\frac{p(X)}{{\rm lc}(p)}=\prod_{\alpha\in\Omega_p}(X-\alpha)$, where ${\rm lc}(p)$ is the leading coefficient of $p(X)$. The polynomial $g(X)$ is in $K[X]$ and is monic.

Let $M\in M_n(K)$ be a matrix with characteristic polynomial equal to $g(X)$ (e.g., the companion matrix of $g(X)$). The multi-set of eigenvalues of $M$ over $\overline{K}$ is exactly $\Omega_p$. Notice that $f(M)$ is in $M_n(K)$, so its characteristic polynomial is in $K[X]$. By \cite[Chap. VII, Proposition 10]{BourbakiAlg}  (considering everything over $\overline{K}$) the characteristic polynomial of $f(M)$ is precisely $P(X)$. In particular, the set of eigenvalues of $f(M)$ is $f(\Omega_p)=\{f(\alpha)\,|\,\alpha\in\Omega_p\}$, which, by assumption on $f(X)$, is contained in $\olD$. Hence, the coefficients of $P(X)$ are integral over $D$ (being the elementary symmetric functions of the roots), and since $D$ is integrally closed they are in $D$. 

For the last statement, notice that for each $\alpha\in\Omega_p$, $X-\alpha$ divides $f(X)-f(\alpha)$ over $F=K(\Omega_p)$. Hence, $p(X)=\prod_{\alpha\in\Omega_p}(X-\alpha)$ divides $P(f(X))=\prod_{\alpha\in\Omega_p}(f(X)-f(\alpha))$ over $F$. Since both polynomials are in $K[X]$, one divides the other over $K$, as we wanted. \end{proof}
\vskip0.5cm
We prove now Theorem \ref{1st theorem} of the Introduction. For the sake of the reader we repeat here the statement.
\begin{Th}\label{Intclopull}
Let $p\in D[X]$ be a monic non-constant polynomial and let $\Omega_p\subset\overline{K}$ be the multi-set of its roots. Then the integral closure of $D(p)$ is $\Int_K(\Omega_p,\olD)$.
\end{Th}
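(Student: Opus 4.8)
The plan is to prove the equality by sandwiching: I would show that $\Int_K(\Omega_p,\olD)$ contains $D(p)$, is integral over $D(p)$, and is integrally closed. Since $D(p)$ and $\Int_K(\Omega_p,\olD)$ both contain $D[X]$ they have the common fraction field $K(X)$, so these three facts together force $\Int_K(\Omega_p,\olD)$ to be exactly the integral closure of $D(p)$ in $K(X)$.

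The first move is to reduce to the case in which $D$ is integrally closed, which is what makes Lemma \ref{pPp} applicable. Let $D'$ be the integral closure of $D$ in $K$. Every element of $D'$ is integral over $D\subseteq D(p)$ and $X\in D[X]\subseteq D(p)$, so the integral closure of $D(p)$ (a ring) contains $D'$ and $X$, hence contains $D'[X]$, and since it also contains $p(X)K[X]\subseteq D(p)$ it contains $D'(p)=D'[X]+p(X)K[X]$. As $D(p)\subseteq D'(p)$ as well, $D(p)$ and $D'(p)$ have the same integral closure. Moreover $\olD$ is also the integral closure of $D'$ in $\olK$, so $\Int_K(\Omega_p,\olD)$ is unchanged under the replacement of $D$ by $D'$. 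Thus we may assume $D=D'$ is integrally closed.

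Now the containment $D(p)\subseteq\Int_K(\Omega_p,\olD)$ is immediate, since a polynomial $r(X)+p(X)q(X)$ with $r\in D[X]$ sends each root $\alpha$ of $p$ to $r(\alpha)\in D[\alpha]\subseteq\olD$; and $\Int_K(\Omega_p,\olD)=\Int_K(\Omega_p,D_F)=\Int(\Omega_p,D_F)\cap K[X]$ is integrally closed by Remark \ref{IntKOmegaD integ closed}. The core step is to show every $f\in\Int_K(\Omega_p,\olD)$ is integral over $D(p)$, and here Lemma \ref{pPp} does the work: the polynomial $P(X)=\prod_{\alpha\in\Omega_p}(X-f(\alpha))=X^n+c_{n-1}X^{n-1}+\cdots+c_0$ is monic of degree $n=\deg p$ with $c_0,\dots,c_{n-1}\in D$, and $P(f(X))=p(X)h(X)$ for some $h\in K[X]$. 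Hence $f(X)$ satisfies the monic degree-$n$ polynomial $T^n+c_{n-1}T^{n-1}+\cdots+c_1 T+(c_0-p(X)h(X))$, whose coefficients all lie in $D(p)$: indeed $c_1,\dots,c_{n-1}\in D\subseteq D(p)$, while $c_0\in D\subseteq D(p)$ and $p(X)h(X)\in p(X)K[X]\subseteq D(p)$. So $f(X)$ is integral over $D(p)$, completing the argument.

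The step I expect to require the most care is the reduction to integrally closed $D$ combined with the observation that Lemma \ref{pPp} really yields an integral equation over $D(p)$ rather than merely over $K[X]$: the key point is precisely that $p(X)K[X]$ is an ideal of $D(p)$, so the correction term $c_0-p(X)h(X)$ stays inside $D(p)$. Once one has Lemma \ref{pPp} and the integral-closedness of $\Int_K(\Omega_p,\olD)$ from Remark \ref{IntKOmegaD integ closed}, the rest is formal bookkeeping with integral closures.
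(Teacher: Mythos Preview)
Your proof is correct and follows essentially the same route as the paper: reduce to integrally closed $D$ via the integral extension $D(p)\subseteq D'(p)$, invoke Remark \ref{IntKOmegaD integ closed} for the integral-closedness of $\Int_K(\Omega_p,\olD)$, and use Lemma \ref{pPp} to produce a monic equation for each $f\in\Int_K(\Omega_p,\olD)$ over $D(p)$. You are simply more explicit than the paper in spelling out why the constant term $c_0-p(X)h(X)$ lies in $D(p)$ and why $D(p)$ and $D'(p)$ share the same integral closure, but the argument is the same.
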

\noindent Notice that by definition $\Int_K(\Omega_p,\overline D)=\Int_K(\Omega_p',\overline D)$, where $\Omega_p'$ is the underlying set of $\Omega_p$, the set of distinct roots of $p(X)$. 

\begin{proof} Remember that $\Int_K(\Omega_p,\olD)$ is integrally closed by the Remark at the beginning of this section. If $D'$ is the integral closure of $D$ in its quotient field $K$, then $D(p)\subseteq D'(p)$ is an integral ring extension, because $D[X]\subseteq D'[X]$ is. 
Since $D(p)\subseteq D'(p)\subseteq \Int_K(\Omega_p,\olD)$ (because $p(X)$ is monic, so $\Omega_p$ is contained in $\olD$), without loss of generality we may assume that $D$ is integrally closed (that is, $D=D'$). To prove the statement, it suffices to prove that $D(p)\subseteq \Int_K(\Omega_p,\olD)$ is an integral ring extension.

Let $f\in \Int_K(\Omega_p,\olD)$ and consider $P(X)$ defined as in Lemma \ref{pPp}. Then $P(X)$ is a monic polynomial in $D[X]$ such that $P(f(X))$ is divisible by $p(X)$ over $K$. Hence, $P(f(X))$ is in $D(p)$, and this gives a monic integral equation for $f(X)$ over the pullback ring $D(p)$. \end{proof}
\vskip0.3cm

We prove now that the ring of polynomials in $K[X]$ whose divided differences of order up to $n$ are integer-valued over a finite subset $\Omega$ of $D$ has integral closure equal to the ring of polynomials which are integer-valued over $\Omega$.

\begin{Cor}\label{Intclo div diff}
Let $D$ be an integrally closed domain. Let $\Omega\subset D$ be a finite set and let $n\in \N$. Then the integral closure of $\Int^{\{n\}}(\Omega,D)$ is $\Int(\Omega,D)$.
\end{Cor}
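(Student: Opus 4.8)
The plan is to combine Theorem~\ref{Int div diff pullback}, which expresses $\Int^{\{n\}}(\Omega,D)$ as the intersection $\bigcap_{q\in\mathcal{P}_{n+1}(\Omega)}D(q)$ (note the shift: divided differences up to order $n$ corresponds to degree $n+1$ polynomials), with Theorem~\ref{Intclopull}, which identifies the integral closure of each $D(q)$ with $\Int_K(\Omega_q,\olD)$. Since each $q\in\mathcal{P}_{n+1}(\Omega)$ is split over $D$, its roots all lie in $\Omega\subseteq D$, so $\Omega_q\subseteq\Omega$ and hence $\Int_K(\Omega_q,\olD)=\Int(\Omega_q,D')=\Int(\Omega_q,D)$, using that $D$ is integrally closed. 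Thus each $D(q)$ has integral closure contained in $\Int(\Omega,D)$.

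First I would show $\Int(\Omega,D)$ is integral over $\Int^{\{n\}}(\Omega,D)$. Given $f\in\Int(\Omega,D)$, I would mimic the argument of Lemma~\ref{IntED}: form the monic polynomial $m(X)=\prod_{a\in\Omega}m_{f(a)}(X)\in D[X]$, where $m_{f(a)}$ is a monic polynomial in $D[X]$ killing $f(a)$ (here $f(a)\in D$ so one can simply take $m_{f(a)}(X)=X-f(a)$, making $m(X)=\prod_{a\in\Omega}(X-f(a))$). Then $m(f(X))$ vanishes at every point of $\Omega$; moreover, for each $a\in\Omega$, $X-a$ divides $f(X)-f(a)$ in $D[X]$, hence $\prod_{a\in\Omega}(X-a)$ divides $m(f(X))=\prod_{a\in\Omega}(f(X)-f(a))$ in $D[X]$, so all the divided differences $\Phi^k(m(f))$ vanish on $\Omega$ (they are divided differences of a polynomial divisible by $\prod(X-a)$, and one invokes the divided-difference vanishing as in Lemma~\ref{Phiproots}, or simply notes $m(f(X))\in D(q)$ for every relevant $q$, hence in the intersection). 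Therefore $m(f(X))\in\Int^{\{n\}}(\Omega,D)$, giving a monic integral equation for $f$.

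For the reverse inclusion, I would argue that $\Int(\Omega,D)$ is integrally closed — this is \cite[Proposition IV.4.1]{CaCh}, since $D$ is integrally closed — and that $\Int^{\{n\}}(\Omega,D)\subseteq\Int(\Omega,D)$ (taking $k=0$ in the definition of the divided-difference ring). Any ring between $\Int^{\{n\}}(\Omega,D)$ and its integral closure is thus contained in $\Int(\Omega,D)$, and combined with the previous paragraph the integral closure is exactly $\Int(\Omega,D)$.

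The main obstacle is purely bookkeeping rather than conceptual: one must be careful about the degree shift between "divided differences up to order $n$'' and "polynomials of degree $n+1$'' in applying Theorem~\ref{Int div diff pullback}, and one must confirm that for split $q$ the ring $\Int_K(\Omega_q,\olD)$ collapses to $\Int(\Omega_q,D)$ so that the intersection of the integral closures of the $D(q)$ is contained in $\Int(\Omega,D)$ (in fact the cleanest route avoids intersecting integral closures termwise and instead directly exhibits the monic equation $m(f(X))\in\Int^{\{n\}}(\Omega,D)$ as above, which sidesteps the subtlety that an intersection of integrally closed rings need not be handled via its constituents). I would favor that direct approach. The finiteness of $\Omega$ is used to ensure $m(X)$ is a genuine polynomial.
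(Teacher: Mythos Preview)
Your approach contains a genuine gap in the step where you claim $m(f(X))\in\Int^{\{n\}}(\Omega,D)$. Taking $m(X)=\prod_{a\in\Omega}(X-f(a))$ is not enough: while it is true that $\prod_{a\in\Omega}(X-a)$ divides $m(f(X))$ in $K[X]$ (not in $D[X]$, by the way---$f$ has coefficients in $K$), this does \emph{not} imply that $\Phi^k(m(f))$ takes integer values on all of $\Omega^{k+1}$ for $k\leq n$, nor that $m(f(X))\in D(q)$ for every $q\in\mathcal{P}_{n+1}(\Omega)$. The polynomials $q$ in $\mathcal{P}_{n+1}(\Omega)$ can have repeated roots---for instance $q(X)=(X-a)^{n+1}$---and membership in $D(q)$ requires the remainder modulo $(X-a)^{n+1}$ to lie in $D[X]$, which your $m(f(X))$ does not guarantee. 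Lemma~\ref{Phiproots} does not help here: it concerns divided differences of $p$ at sub-multi-sets of the multi-set $\Omega_p$, not at arbitrary tuples with repeated entries drawn from the underlying set.

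A concrete counterexample: take $D=\Z$, $\Omega=\{0\}$, $n=1$, and $f(X)=X/2\in\Int(\{0\},\Z)$. Then $m(X)=X$, so $m(f(X))=X/2$. But $\Int^{\{1\}}(\{0\},\Z)=\Z(X^2)=\Z[X]+X^2\Q[X]$, and $X/2\notin\Z(X^2)$ since its remainder modulo $X^2$ is $X/2\notin\Z[X]$. Equivalently, $\Phi^1(X/2)(0,0)=1/2\notin\Z$.

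The paper's proof repairs exactly this: instead of $m(X)$, it takes $Q(X)=\prod_{a\in\Omega}(X-f(a))^{n+1}$ (this is $\prod_{q\in\mathcal P}P_{f,q}$ with $\mathcal P=\{(X-a)^{n+1}:a\in\Omega\}$). Then $(f(X)-f(a))^{n+1}$ is divisible by $(X-a)^{n+1}$, so for any $p\in\mathcal{P}_{n+1}(\Omega)$ with root $a$ of multiplicity $e\leq n+1$, one has $(X-a)^e\mid Q(f(X))$; hence $p\mid Q(f(X))$ in $K[X]$ and $Q(f(X))\in D(p)$ for all such $p$. Your overall strategy---exhibit a single monic polynomial $Q$ with $Q(f)\in\bigcap D(q)$---is the same as the paper's; you just need to raise the factors to the $(n+1)$-st power to handle multiplicities.
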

\begin{proof} As in the proof of Theorem \ref{Intclopull}, it is sufficient to show that any $f\in\Int(\Omega,D)$ satisfies a monic equation over the ring $\Int^{\{n\}}(\Omega,D)$.

By Theorem \ref{Int div diff pullback}, $\Int^{\{n\}}(\Omega,D)$ is equal to the intersection of the pullbacks $D(p)$, as $p(X)$ ranges through the finite family $\mathcal{P}_{n+1}(\Omega)$ of monic polynomials over $D$ of degree $n+1$ whose set of roots is contained in $\Omega$. We consider the subset $\mathcal{P}$ of $\mathcal{P}_{n+1}(\Omega)$ of those polynomials of the form $q(X)=(X-a)^{n+1}$, for $a\in\Omega$. For each of them we consider the polynomial $P_{f,q}\in D[X]$ as defined in Lemma \ref{pPp}. Therefore
$$Q(X)\doteqdot\prod_{q\in\mathcal{P}}P_{f,q}(X)$$
is a monic polynomial in $D[X]$ such that $Q(f(X))$ is in $p(X)K[X]$ for each $p\in \mathcal{P}_{n+1}(\Omega)$. In fact, 
let $p\in \mathcal{P}_{n+1}(\Omega)$. If $a\in\Omega$ is a root of $p(X)$ of multiplicity $e\leq \deg(p)=n+1$, then $(X-a)^e$ divides $(f(X)-f(a))^{n+1}$ over $K$. Notice that the latter is a factor of $Q(f(X))$. Since this holds for every root of $p(X)$, then $p(X)$ divides $Q(f(X))$ over $K$, that is, $Q(f(X))\in pK[X]\subset D(p)$. Since this holds for every $p\in \mathcal{P}_{n+1}(\Omega)$, this concludes the proof of the Corollary. \end{proof} 

\begin{Rem}
If $\Omega\subseteq D$ is an infinite set and $D$ has finite residue rings (that is, $D/dD$ is a finite ring for every non-zero $d\in D$), reasoning as in \cite{PerWer} by means of the pullback representation of $\Int^{\{n\}}(\Omega,D)$ given by Theorem \ref{Int div diff pullback}, the same result of Corollary \ref{Intclo div diff} holds. For $\Omega=D$, the result was given in \cite[Corollary 17]{PerWer}, where it is proved that the integral closure of $\Int_K(T_{n+1}(D))$ is $\Int(D)$. Note that, by \cite[Theorem 16]{EvFaJoh}, the former ring is equal to $\Int^{\{n\}}(D)$ (see Remark \ref{remark on Intdivdiff intersection pullbacks}).
\end{Rem}

\section{Pr\"ufer rings of integral-valued polynomials}\label{PrufRingIntVal}
\vskip0.2cm
The next lemma, though easy, is a crucial step to establish when $\Int_K(\Omega,\olD)$ is a Pr\"ufer domain, for a finite set $\Omega$ of integral elements over $D$.
\begin{Lemma}\label{integral extension pullbacks}
Let $p\in D[X]$ be a monic non-constant polynomial and $K\subseteq F$ be an algebraic extension. Let $D_F$ the integral closure of $D$ in $F$. Then $D(p)\subseteq D_F(p)$ is an integral ring extension.
\end{Lemma}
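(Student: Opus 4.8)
The plan is to prove the statement by verifying that every element of $D_F(p)$ is integral over $D(p)$; combined with the obvious inclusion $D(p)=D[X]+p(X)K[X]\subseteq D_F[X]+p(X)F[X]=D_F(p)$, this yields the lemma. By the definition of $D_F(p)$, an arbitrary $g\in D_F(p)$ can be written $g=\rho+p\sigma$ with $\rho\in D_F[X]$ and $\sigma\in F[X]$. Since the sum of two elements integral over a commutative ring is again integral over it, it suffices to show separately that $\rho$ and that $p\sigma$ are integral over $D(p)$.

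For $\rho$ this is immediate and requires no hypothesis on $D$: the finitely many coefficients of $\rho$ lie in $D_F$, hence are integral over $D$, so adjoining them to $D[X]$ produces a module-finite (in particular integral) ring extension of $D[X]$ containing $\rho$; as $D[X]\subseteq D(p)$, $\rho$ is integral over $D(p)$. For the term $p\sigma$ the structural fact to exploit is that $p(X)K[X]\subseteq D(p)$, so it is enough to produce a monic polynomial in $Y$, all of whose non-leading coefficients lie in $p(X)K[X]$, that vanishes at $p\sigma$.

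I would obtain such a polynomial from the regular representation. Let $F_0\subseteq F$ be the finite extension of $K$ generated by the coefficients of $\sigma$, of degree $N=[F_0:K]$, and view $F_0[X]\cong F_0\otimes_K K[X]$ as a free $K[X]$-module of rank $N$. Multiplication by $\sigma$ is a $K[X]$-linear endomorphism of this module, represented by a matrix $Q\in M_N(K[X])$ once a $K$-basis of $F_0$ is fixed; crucially, multiplication by the scalar $p(X)\in K[X]$ has matrix $p(X)I_N$, so multiplication by $p\sigma$ has matrix $p(X)Q$. Its characteristic polynomial $\det(Y I_N-p(X)Q)=p(X)^N\chi_Q(Y/p(X))$ is monic of degree $N$ in $Y$, and since each non-leading coefficient of $\chi_Q\in K[X][Y]$ gets multiplied by a positive power of $p(X)$, all non-leading coefficients of $\det(Y I_N-p(X)Q)$ lie in $p(X)K[X]\subseteq D(p)$. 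By the Cayley--Hamilton theorem this characteristic polynomial annihilates the endomorphism ``multiplication by $p\sigma$'', and evaluating the resulting identity at $1\in F_0[X]$ shows it vanishes at $p\sigma$ itself; hence $p\sigma$ is integral over $D(p)$, and we are done.

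The only real obstacle is noticing that the scalar nature of multiplication by $p(X)$ forces the lower-order coefficients of this characteristic polynomial into the ideal $p(X)K[X]$; everything else is bookkeeping. I note that this route uses neither that $D$ is integrally closed nor any Galois theory, so no separability subtleties arise in positive characteristic (a product-of-conjugates/norm argument over a Galois closure would also work, but would need an extra Frobenius reduction in the inseparable case), and that the passage to the finite subextension $F_0$ is harmless since only finitely many elements of $F$ occur in a given $g$ — equivalently, $D_F(p)=\bigcup D_{F_0}(p)$ as $F_0$ ranges over the finite subextensions of $F/K$.
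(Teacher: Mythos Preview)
Your proof is correct and follows essentially the same strategy as the paper: decompose an element of $D_F(p)$ as $\rho+p\sigma$, handle $\rho$ via the integrality of $D_F[X]$ over $D[X]$, and for $p\sigma$ take a monic polynomial in $K[X][Y]$ satisfied by $\sigma$ and rescale it by powers of $p$ so that all non-leading coefficients fall into $p(X)K[X]\subseteq D(p)$. The only cosmetic difference is that the paper uses the minimal polynomial of $\sigma$ over $K[X]$, whereas you use the characteristic polynomial of the regular representation and invoke Cayley--Hamilton; the underlying mechanism is identical.
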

\begin{proof} We use the well-known fact that the integral closure of $D[X]$ in $F[X]$ is $D_F[X]$ (\cite[Proposition 13, Chapt. V]{Bourbaki}). Hence, given $f(X)=r(X)+p(X)q(X)\in D_F(p)$, for some $r\in D_F[X]$ ($r=0$ or $\deg(r)<\deg(p)$) and $q\in F[X]$, the polynomial $r(X)$ is integral over $D[X]$, so in particular it is also integral also over $D(p)$. We show now that $h(X)=p(X)q(X)\in p(X)\cdot F[X]$ is integral over $D(p)$.

It is easy to see that, if $\Psi_q(T,X)$ is the minimal polynomial of $q(X)$ over $K[X]$, then the minimal polynomial of $h(X)$ over $K[X]$ is given by $\Psi_{h}(T,X)=p^n\cdot\Psi_{q}(\frac{T}{p},X)$, which is a monic polynomial in $T$ over $D$. Notice that the coefficients of  $\Psi_{h}(T,X)-T^n$ are in $p(X)\cdot K[X]$, so that $\Psi_{h}(T,X)\in D(p)[T]$. This proves our assertion. \end{proof}
\vskip0.2cm
We prove now Theorem \ref{2nd theorem} of the Introduction. 
\vskip0.2cm
\begin{Th}\label{Prufer condition}
Assume that $D$ is integrally closed and let $\Omega$ be a finite subset of $\olD$. Then $\Int_K(\Omega,\olD)$ is Pr\"ufer if and only if $D$ is Pr\"ufer.

\begin{proof} Given $f(X)$ in $\Int_K(\Omega,\overline D)$ and $\alpha\in\Omega$, $f(X)$ is integral-valued over all the conjugates of $\alpha$ over $K$ (see Proposition \ref{intcloirr}). Hence, without loss of generality, we can assume that $\Omega$ is equal to the set of roots $\Omega_p$ of a monic polynomial $p(X)$ over $D$ (more precisely, $p(X)$ is the product of all the minimal polynomials of the elements of $\Omega$, without repetitions).

Let $F=K(\Omega_p)$ be the splitting field of $p(X)$ over $D$ and let $D_F$ be the integral closure of $D$ in $F$. By assumption, $\Omega_p\subset D_F$. Remember that $\Int_K(\Omega_p,\overline D)=\Int_K(\Omega_p,D_F)$ (see the remarks after Proposition \ref{intcloirr}). By the result of McQuillan (\cite[Corollary 7]{McQ}), $\Int(\Omega_p,D_F)$ is a Pr\"ufer domain if and only if $D_F$ is a Pr\"ufer domain. Since $D$ is integrally closed, by \cite[Theorem 22.3 \& 22.4]{GilmBook} $D$ is Pr\"ufer if and only if $D_F$ is Pr\"ufer. We have the following diagram:
$$\xymatrix{
D_F(p)\ar[r]&\Int(\Omega_p,D_F)\ar[r]&F[X]\\
D(p)\ar[u]\ar[r]&\Int_K(\Omega_p,D_F)\ar[u]\ar[r]&K[X]\ar[u]
}$$
By Theorem \ref{Intclopull}, $D(p)\subseteq\Int_K(\Omega_p,D_F)$ and $D_F(p)\subseteq\Int(\Omega_p,D_F)$ are integral ring extensions. Hence, by Lemma \ref{integral extension pullbacks}, $\Int(\Omega_p,D_F)$ is integral over $\Int_K(\Omega_p,D_F)$. Moreover, since the former ring is integrally closed, it is the integral closure of the latter ring in $F[X]$.
Finally, we have these equivalences:
\begin{center}
$D$ Pr\"ufer $\Leftrightarrow$ $D_F$ Pr\"ufer $\Leftrightarrow \Int(\Omega_p,D_F)$ Pr\"ufer $\Leftrightarrow \Int_K(\Omega_p,D_F)$ Pr\"ufer
\end{center}
where the last equivalence follows again by \cite[Theorem 22.3 \& 22.4]{GilmBook} ($\Int_K(\Omega_p,D_F)$ is integrally closed by Remark \ref{IntKOmegaD integ closed}). 
\end{proof}
\end{Th}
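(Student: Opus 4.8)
The plan is to reduce the assertion to McQuillan's theorem on integer-valued polynomials over a finite subset of a domain, using the pullback techniques of Sections~\ref{pullback div diff} and~\ref{Int clos D(p)} to pass between $\Int_K(\Omega,\olD)$ and its companion ring over a splitting field.

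First I would replace $\Omega$ by the multi-set of roots of a single monic polynomial. By Proposition~\ref{intcloirr}, if $f\in K[X]$ is integral-valued at $\alpha\in\olD$, then it is integral-valued at every $K$-conjugate of $\alpha$; hence $\Int_K(\Omega,\olD)=\Int_K(\Omega_p,\olD)$, where $p(X)\in D[X]$ is the product of the distinct minimal polynomials over $K$ of the elements of $\Omega$ (monic, and in $D[X]$ because $D$ is integrally closed). Setting $F=K(\Omega_p)$ and letting $D_F$ be the integral closure of $D$ in $F$, we have $\Omega_p\subset D_F$ and, by the remarks after Proposition~\ref{intcloirr}, $\Int_K(\Omega_p,\olD)=\Int_K(\Omega_p,D_F)=\Int(\Omega_p,D_F)\cap K[X]$. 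So it is enough to prove that $\Int_K(\Omega_p,D_F)$ is Pr\"ufer if and only if $D$ is.

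Then I would string together four equivalences. Since $\Omega_p$ is a finite subset of the domain $D_F$ with quotient field $F$, McQuillan's result (\cite[Corollary~7]{McQ}) gives $\Int(\Omega_p,D_F)$ Pr\"ufer $\Leftrightarrow$ $D_F$ Pr\"ufer. Because $D$ is integrally closed with integral closure $D_F$ in the algebraic extension $F/K$, Gilmer's theorems (\cite[Theorems~22.3 \& 22.4]{GilmBook}) give $D$ Pr\"ufer $\Leftrightarrow$ $D_F$ Pr\"ufer. The remaining, and most delicate, link is between $\Int(\Omega_p,D_F)$ and $\Int_K(\Omega_p,D_F)$: here I would invoke Theorem~\ref{Intclopull} (applied over $D$ and over $D_F$) to obtain integral extensions $D(p)\subseteq\Int_K(\Omega_p,D_F)$ and $D_F(p)\subseteq\Int(\Omega_p,D_F)$, and Lemma~\ref{integral extension pullbacks} to obtain that $D(p)\subseteq D_F(p)$ is integral; composing, $\Int(\Omega_p,D_F)$ is integral over $D(p)$, hence over the intermediate ring $\Int_K(\Omega_p,D_F)$, and since $\Int(\Omega_p,D_F)$ is integrally closed (Remark~\ref{IntKOmegaD integ closed}) it is precisely the integral closure of $\Int_K(\Omega_p,D_F)$ in $F[X]$. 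As $\Int_K(\Omega_p,D_F)$ is also integrally closed (same remark), one last application of \cite[Theorems~22.3 \& 22.4]{GilmBook} yields $\Int_K(\Omega_p,D_F)$ Pr\"ufer $\Leftrightarrow$ $\Int(\Omega_p,D_F)$ Pr\"ufer. Concatenating the four equivalences proves the theorem.

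The step I expect to be the main obstacle is exactly this bridge between the two rings of integer-valued polynomials: there is no direct reason for $\Int(\Omega_p,D_F)$ to be integral over $\Int_K(\Omega_p,D_F)$, and making it work genuinely requires the structural fact (Theorem~\ref{Intclopull}) that both rings sit integrally above the polynomial pullbacks $D(p)$ and $D_F(p)$, together with the base-change compatibility of Lemma~\ref{integral extension pullbacks}. A lesser point to watch is the reduction to $\Omega_p$: one needs the Galois-invariance in Proposition~\ref{intcloirr} to know that enlarging $\Omega$ to the full set of conjugates leaves the ring unchanged, and $D$ integrally closed to keep $p$ in $D[X]$.
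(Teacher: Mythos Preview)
Your proposal is correct and follows essentially the same route as the paper's own proof: the same reduction to $\Omega_p$ via Proposition~\ref{intcloirr}, the same chain of equivalences through $D_F$ and $\Int(\Omega_p,D_F)$ via McQuillan and Gilmer, and the same key step---using Theorem~\ref{Intclopull} together with Lemma~\ref{integral extension pullbacks} to show $\Int(\Omega_p,D_F)$ is integral over $\Int_K(\Omega_p,D_F)$---to close the last equivalence. Your identification of the main obstacle matches precisely where the paper concentrates its effort.
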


As we recalled in the introduction, the intersection of the polynomial pullbacks $D(p)$ arises in many different contexts, especially those concerning rings of integer-valued polynomials over algebras. In section \ref{pullback div diff} we saw that the ring of integer-valued polynomials whose divided differences are also integer-valued can be represented as an intersection of such pullbacks.  We now investigate more deeply how these pullbacks intersect with each other. As a corollary, we obtain a criterion for a pullback $D(p)$ to be integrally closed. 

At the beginning of Section \ref{irrpol} we recalled that a monic irreducible polynomial over an integrally closed domain $D$ is still irreducible over the quotient field $K$. Moreover, a monic polynomial $p\in D[X]$ can be uniquely factored into monic irreducible polynomials over $D$ (see \cite{McA}; this is a sort of Gauss' Lemma for monic polynomials over an integrally closed domain). Therefore, given a monic polynomial $p(X)$ in $D[X]$, we have $p(X)=\prod_i q_i(X)$, where $q_i(X)$ are powers of monic irreducible polynomials in $D[X]$. In particular, the $q_i(X)$'s are pairwise coprime in $K[X]$ (but they may not be coprime over $D$, see below). A polynomial $p(X)$ is square-free exactly when each $q_i(X)$ is irreducible. Notice that $p(X)K[X]$ is an ideal of each pullback $D(q_i)$, for all $i$. In particular, it is an ideal of the intersection of the rings $D(q_i)$.

The next proposition is a generalization of Lemma \ref{Dppullback}. Recall that two ideals $I,J$ of a commutative ring $R$ are coprime if $I+J=R$ (see \cite[Chapt. 2, p. 53]{Bourbaki}).  For this statement we do not require $D$ to be integrally closed. Given $q_1,q_2\in D[X]$, we simply say that $q_1(X)$ and $q_2(X)$ are coprime (over $D$) if the corresponding principal ideals $q_1(X)D[X]$ and $q_2(X)D[X]$ are coprime. 

\begin{Prop}\label{quotient intersection pullbacks}
 
Let $p\in D[X]$ be a monic polynomial. Let $p(X)=\prod_{i}q_i(X)$  be a factorization into monic polynomials over $D$ which are pairwise coprime when they are considered over $K$. Then 
$$\frac{\bigcap_{i}D(q_i)}{p(X)K[X]}\cong \prod_{i}\frac{D[X]}{q_i(X)D[X]}.$$
Moreover, $D(p)=\bigcap_i D(q_i)$ if and only if $\{q_i(X)\}_i$ are pairwise coprime over $D$.

\end{Prop}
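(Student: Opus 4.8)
The plan is to reduce the statement to the Chinese Remainder Theorem in $K[X]$, combined with the pullback description of each $D(q_i)$ given by Lemma \ref{Dppullback}.

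Since the $q_i(X)$ are pairwise coprime in $K[X]$ and $p(X)=\prod_i q_i(X)$, the principal ideals $q_i(X)K[X]$ are pairwise comaximal, hence $\bigcap_i q_i(X)K[X]=\prod_i q_i(X)K[X]=p(X)K[X]$ and the canonical map $\pi\colon K[X]\to\prod_i K[X]/q_i(X)K[X]$ is surjective with kernel $p(X)K[X]$. By Lemma \ref{Dppullback} applied to the monic polynomial $q_i$ (using that $q_i(X)K[X]\cap D[X]=q_i(X)D[X]$), a polynomial $f\in K[X]$ lies in $D(q_i)$ exactly when its image under $K[X]\twoheadrightarrow K[X]/q_i(X)K[X]$ lies in the subring $D[X]/q_i(X)D[X]$; therefore $\bigcap_i D(q_i)=\pi^{-1}\bigl(\prod_i D[X]/q_i(X)D[X]\bigr)$. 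As $p(X)K[X]=\ker\pi$ is contained in this preimage and $\pi$ is onto, restricting $\pi$ yields the isomorphism $\bigl(\bigcap_i D(q_i)\bigr)/p(X)K[X]\cong\prod_i D[X]/q_i(X)D[X]$, proving the first assertion.

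For the second assertion, first note that $D(p)\subseteq\bigcap_i D(q_i)$ always holds by Lemma \ref{DpDq} since each $q_i(X)$ divides $p(X)$, and $p(X)K[X]$ is a common ideal. Passing to the quotients modulo $p(X)K[X]$ and using Lemma \ref{Dppullback} once more in the form $D(p)/p(X)K[X]\cong D[X]/p(X)D[X]$, one checks that, under the identifications just made, the inclusion $D(p)\subseteq\bigcap_i D(q_i)$ becomes the natural reduction homomorphism $\bar\varphi\colon D[X]/p(X)D[X]\to\prod_i D[X]/q_i(X)D[X]$: a remainder $r\in D[X]$ with $\deg r<\deg p$ is sent to the tuple of its remainders modulo the $q_i(X)$, each of which again lies in $D[X]$ because $q_i$ is monic. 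Consequently $D(p)=\bigcap_i D(q_i)$ if and only if $\bar\varphi$ is surjective, equivalently (precomposing with the surjection $D[X]\twoheadrightarrow D[X]/p(X)D[X]$) if and only if the map $D[X]\to\prod_i D[X]/q_i(X)D[X]$ is surjective. Finally I would invoke the elementary criterion that a canonical map $R\to\prod_i R/I_i$ is surjective if and only if the $I_i$ are pairwise comaximal: one direction is CRT, and for the other, surjectivity provides, for each $j\ne k$, an element $e\in R$ with $e\equiv 1\pmod{I_j}$ and $e\equiv 0\pmod{I_i}$ for all $i\ne j$, whence $1=(1-e)+e\in I_j+I_k$. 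Taking $R=D[X]$ and $I_i=q_i(X)D[X]$ then shows that $D(p)=\bigcap_i D(q_i)$ if and only if the ideals $q_i(X)D[X]$ are pairwise comaximal, i.e. the $q_i(X)$ are pairwise coprime over $D$.

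The CRT bookkeeping is routine; the step that needs the most care is checking that the inclusion $D(p)/p(X)K[X]\hookrightarrow\bigl(\bigcap_i D(q_i)\bigr)/p(X)K[X]$ is genuinely the reduction map $\bar\varphi$ after the two pullback-along-$\pi$ identifications, together with keeping track throughout of why Euclidean remainders of polynomials in $D[X]$ by monic polynomials in $D[X]$ remain in $D[X]$.
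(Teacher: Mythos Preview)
Your proposal is correct and follows essentially the same route as the paper: identify $\bigcap_i D(q_i)$ as $\pi^{-1}\bigl(\prod_i D[X]/q_i(X)D[X]\bigr)$ via Lemma \ref{Dppullback} and the Chinese Remainder isomorphism $K[X]/p(X)K[X]\cong\prod_i K[X]/q_i(X)K[X]$, then compare with $D(p)/p(X)K[X]\cong D[X]/p(X)D[X]$ and reduce the equality question to surjectivity of the canonical map $D[X]\to\prod_i D[X]/q_i(X)D[X]$. The only cosmetic difference is that the paper invokes Bourbaki for the converse of the Chinese Remainder Theorem, whereas you supply the short idempotent-lifting argument directly.
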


Note that two polynomials $q_1,q_2\in D[X]$ may be coprime over $K$ without being coprime over $D$: for example, $q_1(X)=X$ and $q_2(X)=X-2$ over $\Z$. However, under this condition, it is easy to verify that $q_1(X)D[X]\cap q_2(X)D[X]=q_1(X)q_2(X)D[X]$.

\begin{proof} It is sufficient to notice that $\bigcap_{i}D(q_i)$ is the pullback of
$$\prod_{i}\frac{D[X]}{q_i(X)D[X]}\subset \prod_{i}\frac{K[X]}{q_i(X)K[X]}\cong \frac{K[X]}{p(X)K[X]}$$
with respect to the canonical residue mapping 
$$\pi:K[X]\twoheadrightarrow \frac{K[X]}{p(X)K[X]},$$
that is: 
$$\pi^{-1}(\prod_{i}\frac{D[X]}{q_i(X)D[X]})=\bigcap_{i}D(q_i).$$

Indeed, by definition we have
$$\pi^{-1}\left(\prod_{i}\frac{D[X]}{q_i(X)D[X]}\right)=\{f\in K[X] \mid f \pmod{q_i(X)K[X]}\in \frac{D[X]}{q_i(X)D[X]},\forall i\}.$$
Since each $q_i(X)$ is monic, by Lemma \ref{Dppullback} this is equivalent to the fact that the remainder of the division of $f(X)$ by $q_i(X)$ is in $D[X]$, that is, $f(X)$ is in $D(q_i)$, hence the statement regarding the isomorphism. We have then the following pullback diagram:
$$\xymatrix{
D(p)\ar@{>>}[d]\ar@{^{(}->}[r]&\bigcap_i D(q_i)\ar@{>>}[d]\\
\frac{D[X]}{p(X)D[X]}\ar@{^{(}->}[r]&\prod_i\frac{D[X]}{q_i(X)D[X]}}$$
where the vertical arrows are the quotient map modulo the common ideal $p(X)K[X]$.  Note that the bottom horizontal arrow is injective by the remark above before the proof.  Then $D(p)=\bigcap_i D(q_i)$ if and only if $D[X]/p(X)D[X]$ and $\prod_i D[X]/q_i(X)D[X]$ are isomorphic. By the converse of the Chinese Remainder Theorem (see \cite[Chapt. 2, \S 1, Proposition 5]{Bourbaki}) this holds if and only if the principal ideals $q_i(X)D[X]$ are pairwise coprime.
\end{proof}

Recall that, given two polynomials $p_1,p_2\in D[X]$, the principal ideals $p_i(X)D[X]$, $i=1,2$, are coprime if and only if the resultant ${\rm Res}(p_1,p_2)$ is a unit of $D$ if and only if $p_1,p_2$ have no common root modulo any maximal ideal $M\subset D$. Notice that $p_1(X),p_2(X)$ are coprime in $K[X]$ if and only if ${\rm Res}(p_1,p_2)\not=0$.

The next proposition is a generalization of Remark \ref{IntED pullback}: given a monic non-constant square-free polynomial $p(X)$ in $D[X]$, we determine the quotient ring of $\Int_K(\Omega_p,\olD)$ modulo the principal ideal $p(X)K[X]$. Note that in the case $\Omega_p\subset D$, we have $\Int_K(\Omega_p,\olD)=\Int(\Omega_p,D)$ and we are in the case already treated (essentially by McQuillan).

\begin{Prop}\label{Quotient IntKOmegap}
Let $p\in D[X]$ be a monic non-constant polynomial which is square-free, say $p(X)=\prod_{i=1}^k p_i(X)$, where $p_i(X)$, for $i=1,\ldots,k$, are monic, distinct and irreducible polynomials over $D$. Then
$$\frac{\Int_K(\Omega_p,\olD)}{p(X)K[X]}\cong \prod_{i=1}^k D_{K_i}$$
where $D_{K_i}$ is the integral closure of $D$ in the field $K_i\cong\frac{K[X]}{p_i(X)K[X]}$, for each $i=1,\ldots,k$.
\begin{proof}
For each $i=1,\ldots,k$, we set 
$K_i\doteqdot K[X]/(p_i(X))\cong K[\alpha_i]$, which is a finite field extension of $K$, where $\alpha_i$ is a (fixed) root of $p_i(X)$. Let also $D_{K_i}$ be the integral closure of $D$ in $K_i$, for $i=1,\ldots,k$. By assumption on the $p_i(X)$'s, $D[X]/(p_i(X)D[X])\cong D[\alpha_i]\subset K[\alpha_i]$. Note that $\Int_K(\Omega_p,\olD)=\Int_K(\{\alpha_1,\ldots,\alpha_k\},\olD)$: if $f\in K[X]$ is integral-valued on $\alpha_i$ then it is integral-valued on every conjugate root of $\alpha$ of $\alpha_i$, that is on the set of roots $\Omega_{p_i}$ (see also Proposition \ref{intcloirr}).

As we remarked in the introduction, the rings $\Int_K(\Omega_p,\olD)\subset K[X]$ have the ideal $p(X)K[X]$ in common, so that $\Int_K(\Omega_p,\olD)$ is a pullback with respect to the canonical residue map $\pi:K[X]\twoheadrightarrow \frac{K[X]}{p(X)K[X]}$. The polynomial ring $K[X]$ is mapped to $K[X]/(p(X))\cong\prod_{i=1}^k K[\alpha_i]$ by the map which sends $X$ to $(\alpha_1,\ldots,\alpha_k)$, so that a polynomial $f\in K[X]$ is mapped to $(f(\alpha_1),\ldots,f(\alpha_k))$.

In the same way as in Proposition \ref{quotient intersection pullbacks} we have just to prove that $\Int_K(\Omega_p,\olD)=\pi^{-1}(\prod_{i=1}^k D_{K_i})$. By definition,
$$\pi^{-1}\left(\prod_{i=1}^k D_{K_i}\right)=\{f\in K[X] \mid f(\alpha_i)\in D_{K_i},\forall i=1,\ldots,k\}$$
so that a polynomial $f(X)$ is in this ring if and only if it is integral-valued on every $\alpha_i$, that is, $f\in \Int_K(\{\alpha_1,\ldots,\alpha_k\},\olD)$.
\end{proof}
\end{Prop}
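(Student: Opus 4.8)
The plan is to realise $\Int_K(\Omega_p,\olD)$ as a pullback along the canonical residue map $\pi\colon K[X]\twoheadrightarrow K[X]/(p(X))$, in the same spirit as Remark~\ref{IntED pullback} and Proposition~\ref{quotient intersection pullbacks}. First I would record that, $p(X)$ being square-free with distinct monic irreducible factors $p_1,\dots,p_k$ over $D$, these factors are pairwise coprime in $K[X]$; hence the Chinese Remainder Theorem gives $K[X]/(p(X))\cong\prod_{i=1}^k K[X]/(p_i(X))\cong\prod_{i=1}^k K_i$, and under this identification $\pi$ becomes the map $f\mapsto(f(\alpha_1),\dots,f(\alpha_k))$, where for each $i$ we fix a root $\alpha_i$ of $p_i(X)$ in $\olK$ and identify $K_i=K[X]/(p_i(X))$ with $K[\alpha_i]$. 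Since $p_i(X)$ is the minimal polynomial of $\alpha_i$ over $K$, we also have $D[X]/(p_i(X)D[X])\cong D[\alpha_i]\subset K[\alpha_i]$.

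Next I would observe that $p(X)K[X]$ is an ideal of $\Int_K(\Omega_p,\olD)$, since any polynomial divisible by $p(X)$ vanishes on $\Omega_p$ and hence sends $\Omega_p$ into $\olD$. Therefore $\Int_K(\Omega_p,\olD)$ is a pullback: writing $B$ for its image under $\pi$, a subring of $\prod_i K_i$, we get $\Int_K(\Omega_p,\olD)=\pi^{-1}(B)$ and $B\cong\Int_K(\Omega_p,\olD)/p(X)K[X]$. The whole proposition thus reduces to identifying $B$ with $\prod_{i=1}^k D_{K_i}$, that is, to proving $\Int_K(\Omega_p,\olD)=\pi^{-1}\bigl(\prod_{i=1}^k D_{K_i}\bigr)$.

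To do this, unwind the definition: $\pi^{-1}\bigl(\prod_i D_{K_i}\bigr)=\{f\in K[X]\mid f(\alpha_i)\in D_{K_i}\text{ for all }i\}$, and because $D_{K_i}$ is the integral closure of $D$ in $K_i$, the membership $f(\alpha_i)\in D_{K_i}$ simply says that $f(\alpha_i)$ is integral over $D$; hence $\pi^{-1}\bigl(\prod_i D_{K_i}\bigr)=\Int_K(\{\alpha_1,\dots,\alpha_k\},\olD)$. Finally I would invoke the conjugacy argument of Proposition~\ref{intcloirr}: if $f(\alpha_i)$ is integral over $D$, then so is $f(\alpha_i')$ for every $K$-conjugate $\alpha_i'$ of $\alpha_i$, i.e.\ $f$ is integral-valued on all of $\Omega_{p_i}$; since $p$ is square-free, $\Omega_p$ is the disjoint union of the $\Omega_{p_i}$, so being integral-valued on $\{\alpha_1,\dots,\alpha_k\}$ is the same as being integral-valued on $\Omega_p$. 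This gives $\Int_K(\{\alpha_1,\dots,\alpha_k\},\olD)=\Int_K(\Omega_p,\olD)$, and plugging $B=\prod_i D_{K_i}$ back into the pullback square delivers the asserted isomorphism.

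The only point requiring genuine care is, as in Proposition~\ref{quotient intersection pullbacks}, keeping straight the difference between coprimality over $D$ and over $K$: here we only need the $p_i$ to be coprime in $K[X]$ (which is automatic from square-freeness), since we are decomposing $K[X]/(p(X))$ and not $D[X]/(p(X)D[X])$ — so no hypothesis on how the $p_i$ interact over $D$ is needed. A secondary, routine, point is to be sure that $p_i(X)$ is the minimal polynomial of $\alpha_i$ over $K$, so that $K_i$ is a field and $D[X]/(p_i(X)D[X])\cong D[\alpha_i]$. With those in hand, everything else is pullback bookkeeping, the substantive input being the conjugacy argument borrowed from Proposition~\ref{intcloirr}.
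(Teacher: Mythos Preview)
Your proof is correct and follows essentially the same approach as the paper's: realise $\Int_K(\Omega_p,\olD)$ as a pullback along $\pi$, decompose $K[X]/(p(X))\cong\prod_i K_i$ via the Chinese Remainder Theorem, identify $\pi^{-1}(\prod_i D_{K_i})$ with $\Int_K(\{\alpha_1,\dots,\alpha_k\},\olD)$, and then use the conjugacy argument from Proposition~\ref{intcloirr} to conclude. Your added remarks on coprimality over $K$ versus $D$ and on $p_i$ being the minimal polynomial of $\alpha_i$ are helpful clarifications but do not alter the structure of the argument.
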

An equivalent statement of Proposition \ref{Quotient IntKOmegap} is the following: let $\Omega$ be a finite subset of $\olD$ and let $p\in D[X]$ be the product of the minimal polynomials $p_1(X),\ldots,p_k(X)$ of the elements  of $\Omega$, without repetitions. Then the quotient of $\Int_K(\Omega,\olD)$ modulo $p(X)K[X]$ is isomorphic to $\prod_{i=1}^k D_{K_i}$, where $D_{K_i}$ is as in the statement of Proposition \ref{Quotient IntKOmegap}. We notice that a proof of Theorem \ref{Prufer condition} follows also in another way by \cite[Theorem 4.3]{Boy}, due to Proposition \ref{Quotient IntKOmegap}. 

\vskip0.5cm
\begin{Th} \label{main thm} Let $p\in D[X]$ be a monic non-constant  polynomial. Suppose that $p(X)=\prod_{k=1,\ldots,k}p_i(X)^{e_i}$ is the unique factorization of $p(X)$ into powers of monic irreducible polynomials in $D[X]$, $e_i\geq 1$. Then $D(p)$ is integrally closed if and only if the following conditions are satisfied:
\begin{itemize}
 \item[i)] $p(X)$ is squarefree (i.e.: $e_i=1$ for all $i$).
 \item[ii)] for each $i=1,\ldots,k$, $D[X]/(p_i(X))\cong D_{K_i}$, where the latter is the integral closure of $D$ in the field $K_i\cong K[X]/(p_i(X))$.
 \item[iii)] ${\rm Res}(p_i,p_j)\in D^*$ for each $i\not=j$.
\end{itemize} 
If $D$ is a Pr\"ufer domain, $D(p)$ is integrally closed if and only if $D(p)$ is a Pr\"ufer domain, and in that case $D(p)=\Int_K(\Omega_p,\olD)$.
\end{Th}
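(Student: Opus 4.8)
The plan is to reduce the whole first assertion to Theorem \ref{Intclopull}, which already identifies $\Int_K(\Omega_p,\olD)$ as the integral closure of $D(p)$. So $D(p)$ is integrally closed if and only if $D(p)=\Int_K(\Omega_p,\olD)$, and the problem becomes that of deciding when this equality holds. I keep in mind throughout that $\Int_K(\Omega_p,\olD)$ depends only on the underlying set of $\Omega_p$, hence only on the radical $p_{\mathrm{red}}(X)=\prod_{i=1}^{k}p_i(X)$ of $p(X)$, and that here $D$ is integrally closed, so each monic irreducible $p_i$ of $D[X]$ is also irreducible in $K[X]$.

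First I would show that condition i) is necessary. If $p(X)$ is not squarefree then $p_{\mathrm{red}}(X)\mid p(X)$ while $p_{\mathrm{red}}(X)\neq p(X)$, so Lemma \ref{DpDq} gives $D(p)\subsetneq D(p_{\mathrm{red}})$; since also $D(p_{\mathrm{red}})\subseteq\Int_K(\Omega_{p_{\mathrm{red}}},\olD)=\Int_K(\Omega_p,\olD)$ (the general containment noted in the Introduction), we get $D(p)\subsetneq\Int_K(\Omega_p,\olD)$, so $D(p)$ is not integrally closed. From now on I may therefore assume $p(X)=\prod_{i=1}^{k}p_i(X)$ is squarefree with the $p_i$ distinct monic irreducible polynomials of $D[X]$, hence pairwise coprime in $K[X]$.

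Next I would sandwich $D(p)$ as $D(p)\subseteq R\subseteq\Int_K(\Omega_p,\olD)$, where $R\doteqdot\bigcap_{i=1}^{k}D(p_i)$: the right-hand inclusion holds because a polynomial is integral-valued on $\Omega_p=\bigcup_i\Omega_{p_i}$ exactly when it is integral-valued on each $\Omega_{p_i}$, and $D(p_i)\subseteq\Int_K(\Omega_{p_i},\olD)$. I then treat the two containments separately. The equality $D(p)=R$ holds if and only if iii) holds: this is the last assertion of Proposition \ref{quotient intersection pullbacks}, applied to the factorization $p=\prod_i p_i$ into factors pairwise coprime over $K$, combined with the recalled fact that $p_i(X)D[X]$ and $p_j(X)D[X]$ are coprime exactly when ${\rm Res}(p_i,p_j)\in D^{*}$. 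The equality $R=\Int_K(\Omega_p,\olD)$ holds if and only if ii) holds: both rings share the ideal $p(X)K[X]$ with $K[X]$, so it suffices to compare the quotients modulo $p(X)K[X]$; by Proposition \ref{quotient intersection pullbacks}, $R/p(X)K[X]\cong\prod_i D[X]/(p_i(X))\cong\prod_i D[\alpha_i]$, while by Proposition \ref{Quotient IntKOmegap}, $\Int_K(\Omega_p,\olD)/p(X)K[X]\cong\prod_i D_{K_i}$, both isomorphisms being induced by the \emph{same} canonical residue map $\pi\colon K[X]\twoheadrightarrow K[X]/p(X)K[X]\cong\prod_i K_i$, so the inclusion $R\subseteq\Int_K(\Omega_p,\olD)$ corresponds componentwise to $D[\alpha_i]\hookrightarrow D_{K_i}$, and is an equality precisely when $D[X]/(p_i(X))\cong D_{K_i}$ for every $i$. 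Putting these together, in the squarefree case $D(p)=\Int_K(\Omega_p,\olD)$ if and only if ii) and iii) both hold, which with the necessity of i) gives the characterization.

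For the Pr\"ufer addendum I would argue: if $D(p)$ is Pr\"ufer then it is integrally closed, so that implication costs nothing (and uses nothing about $D$). Conversely, if $D$ is Pr\"ufer, hence integrally closed, and $D(p)$ is integrally closed, then by the first part $D(p)=\Int_K(\Omega_p,\olD)$; since $\Omega_p$ is a finite subset of $\olD$, Theorem \ref{Prufer condition} yields that $\Int_K(\Omega_p,\olD)=D(p)$ is Pr\"ufer because $D$ is, and the equality $D(p)=\Int_K(\Omega_p,\olD)$ asserted in the last line is exactly what we have used. I expect the one genuinely delicate point to be the verification that the two quotient descriptions in the second containment are read off from the same residue map, so that the inclusion of rings really does become the componentwise inclusions $D[\alpha_i]\subseteq D_{K_i}$; everything else is bookkeeping on top of Theorems \ref{Intclopull} and \ref{Prufer condition}, Propositions \ref{quotient intersection pullbacks} and \ref{Quotient IntKOmegap}, and Lemma \ref{DpDq}.
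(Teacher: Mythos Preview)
Your proposal is correct and follows essentially the same route as the paper's own proof: reduce via Theorem \ref{Intclopull} to the question of when $D(p)=\Int_K(\Omega_p,\olD)$, use Lemma \ref{DpDq} to force squarefreeness, then sandwich $D(p)\subseteq\bigcap_i D(p_i)\subseteq\Int_K(\Omega_p,\olD)$ and identify the two containments with conditions iii) and ii) via Propositions \ref{quotient intersection pullbacks} and \ref{Quotient IntKOmegap}, finishing the Pr\"ufer addendum by Theorem \ref{Prufer condition}. Your explicit remark that the two quotient descriptions are read off from the \emph{same} residue map $\pi$ is exactly the content of the paper's pullback diagram (\ref{pullback diagram}), so you have isolated the one point where care is genuinely required.
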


\begin{proof} Suppose that $D(p)$ is integrally closed. If $p(X)$ is not squarefree, then some exponent $e_i$ is strictly greater than $1$. Let $q(X)=\prod_{i=1}^k p_i(X)$ be the square-free part of $p(X)$. By assumption, $q(X)\not=p(X)$ and $q(X)$ divides $p(X)$. So by Lemma  \ref{DpDq}, $D(p)\subsetneq D(q)$. Since $q(X)$ has the same set of roots of $p(X)$, $D(q)$ is contained in $\Int_K(\Omega_p,D_F)$. Hence, $D(p)$ cannot be equal to $\Int_K(\Omega_p,D_F)$ which is in contradiction with Theorem \ref{Intclopull}. Then $p(X)$ is square-free. 

By Propositions \ref{quotient intersection pullbacks} and \ref{Quotient IntKOmegap} (we retain the same notation of those Propositions) we have the following diagram of pullbacks (notice that $\Omega_p=\bigcup_{i=1}^k \Omega_{p_i}$ and $\bigcap_{i=1}^k\Int_K(\Omega_{p_i},\olD)=\Int_K(\Omega_p,\olD)$), where the vertical lines are the reduction map modulo $p(X)K[X]$:
\begin{equation}\label{pullback diagram}
\xymatrix{
D(p)\ar@{>>}[d]\ar@{^{(}->}[r]&\bigcap_i D(p_i)\ar@{>>}[d]\ar@{^{(}->}[r]&\Int_K(\Omega_p,\olD)\ar@{>>}[d]\ar@{^{(}->}[r]&K[X]\ar@{>>}[d]\\
\frac{D[X]}{p(X)D[X]}\ar@{^{(}->}[r]&\prod_i D[\alpha_i]\ar@{^{(}->}[r]&\prod_i D_{K_i}\ar@{^{(}->}[r]&\prod_i K[\alpha_i]
}
\end{equation}
Obviously, $D(p)$ is integrally closed if and only if $D(p)=\bigcap_{i=1}^k D(p_i)$ and $\bigcap_{i=1}^k D(p_i)=\Int_K(\Omega_p,\olD)$.

Since $D(p)=\bigcap_{i=1}^k D(p_i)$, by Proposition \ref{quotient intersection pullbacks} this condition is equivalent to condition iii). Looking at the above diagram, $\bigcap_{i=1}^k D(p_i)=\Int_K(\Omega_p,\olD)$ if and only if $\frac{D[X]}{p_i(X)D[X]}= D_{K_i}$ for all $i=1,\ldots,k$, which is condition ii). 

Conversely, suppose conditions i), ii) and iii) hold. Then looking at the above pullback diagram again, we have that $D(p)$ is equal to $\Int_K(\Omega_p,\olD)$, hence, by Theorem \ref{Intclopull}, $D(p)$ is integrally closed.

Suppose now $D$ is a Pr\"ufer domain. If $D(p)=\Int_K(\Omega_p,\olD)$ then $D(p)$ is a Pr\"ufer domain by Theorem \ref{Prufer condition}. Conversely, if $D(p)$ is Pr\"ufer then it is integrally closed. The very last assertion follows at once by Theorem \ref{Intclopull}. 
\end{proof}
\vskip0.5cm
In the next examples we show that the theorem does not hold if we remove one of the conditions.

\begin{Ex} Let $p_1(X)=X^2+1, p_2(X)=X^2-2\in\Z[X]$ and $p(X)=p_1(X)p_2(X)$. The resultant ${\rm Res}(p_1,p_2)$ is equal to $9$. Moreover, $K_1=\Q(i)\supset O_{K_1}=\Z[X]/(p_1(X))$ and $K_2=\Q(\sqrt{2})\supset O_{K_2}=\Z[X]/(p_2(X))$.\\
Then $\Z(p_1)\cap\Z(p_2)={\rm Int}_\Q(\Omega_p,\overline{\Z})$ (see the proof of Theorem \ref{main thm} and the diagram (\ref{pullback diagram})) but $\Z(p_1\cdot p_2)=\Z(p)\subsetneq \Z(p_1)\cap\Z(p_2)$ (Proposition \ref{quotient intersection pullbacks}). Notice that $\Z(p_1),\Z(p_2)$ are integrally closed: $\Z(p_i)={\rm Int}_\Q(\Omega_{p_i},\overline{\Z})$ for $i=1,2$, but $\Z(p)$ is not integrally closed. Here, condition iii) of Theorem \ref{main thm} is not satisfied.
\end{Ex}

\begin{Ex} $p_1(X)=X^2-5$, $p_2(X)=X^2-6$. The resultant of $p_1(X)$ and $p_2(X)$ is equal to $1$. Then $K_1=\Q(\sqrt{5})\supset O_{K_1}=\Z[\frac{1+\sqrt{5}}{2}]\supsetneq\Z[\sqrt{5}]\cong\Z[X]/(p_1(X))$ and $K_2=\Q(\sqrt{6})\supset O_{K_2}=\Z[X]/(p_2(X))$.\\
Then $\Z(p_1)\cap\Z(p_2)\subsetneq {\rm Int}_\Q(\Omega_p,\overline{\Z})$ but $\Z(p)=\Z(p_1)\cap\Z(p_2)$. Hence, $\Z(p)$ is not integrally closed, because condition ii) of Theorem \ref{main thm} is not satisfied.
\end{Ex}

\begin{Cor}
Let $p\in D[X]$ be a monic polynomial over $D$ which is split in $D$. Then $D(p)$ is integrally closed if and only if the discriminant of $p(X)$ is a unit in $D$.
\end{Cor}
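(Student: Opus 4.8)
The plan is to obtain the corollary as a direct specialization of Theorem~\ref{main thm} (the degenerate case $p=1$ is trivial, since then $D(p)=K[X]$ is integrally closed and $\mathrm{disc}(p)=1$, so assume $\deg p\ge 1$). Since $p$ is monic and splits in $D$, write $p(X)=\prod_{i=1}^{k}(X-b_i)^{e_i}$ with $b_1,\dots,b_k\in D$ the distinct roots of $p$ and $e_i\ge 1$ their multiplicities. Each $X-b_i$ is a monic irreducible of $D[X]$, so this is exactly the factorization of $p$ into powers of monic irreducibles appearing in Theorem~\ref{main thm}, with $p_i(X)=X-b_i$. I would then spell out the three conditions of that theorem in this situation. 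Evaluation $X\mapsto b_i$ identifies $K_i=K[X]/(X-b_i)$ with $K$ and $D[X]/(X-b_i)$ with $D$, so the integral closure $D_{K_i}$ of $D$ in $K_i$ is just $D'$, the integral closure of $D$ in $K$; hence condition~(ii) reads $D=D'$, i.e.\ ``$D$ is integrally closed''. Condition~(iii) reads $\mathrm{Res}(X-b_i,X-b_j)\in D^*$ for $i\ne j$, and since $\mathrm{Res}(X-b_i,X-b_j)=b_i-b_j$ this says $b_i-b_j\in D^*$ for all $i\ne j$. Condition~(i) is simply that $p$ be squarefree, i.e.\ $e_i=1$ for all $i$.

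Next I would match these against the hypothesis on the discriminant. For monic $p$ one has $\mathrm{disc}(p)=V^2$, where $V=\prod_{\ell<m}(\alpha_\ell-\alpha_m)$ is the Vandermonde of the roots $\alpha_1,\dots,\alpha_n$ of $p$ counted with multiplicity. If $p$ is not squarefree then two of the $\alpha_\ell$ coincide, so $\mathrm{disc}(p)=0\notin D^*$, and condition~(i) of Theorem~\ref{main thm} also fails. If $p$ is squarefree, then $k=n$, the roots are $b_1,\dots,b_n$, and $V=\prod_{i<j}(b_i-b_j)$. In an integral domain a finite product lies in $D^*$ if and only if each factor does (from $xy\in D^*$ one gets $x\mid 1$), so $\mathrm{disc}(p)=V^2\in D^*$ iff $V\in D^*$ iff $b_i-b_j\in D^*$ for all $i\ne j$. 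Putting this together, $\mathrm{disc}(p)\in D^*$ precisely when conditions~(i) and~(iii) of Theorem~\ref{main thm} both hold; condition~(ii) holds automatically under the standing assumption that $D$ is integrally closed. Theorem~\ref{main thm} then yields that $D(p)$ is integrally closed if and only if (i), (ii) and (iii) hold, equivalently if and only if $\mathrm{disc}(p)\in D^*$.

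I do not expect a real obstacle here: the argument is pure bookkeeping on top of Theorem~\ref{main thm}. The only points needing a line each are the reduction of the monic-irreducible factorization to the linear factors $X-b_i$, the identification $K_i\cong K$ (which is what collapses condition~(ii)), and the elementary ``unit of a product'' step together with the remark that a repeated root makes the discriminant vanish. One caveat worth noting: in the split case condition~(ii) of Theorem~\ref{main thm} is literally equivalent to ``$D$ is integrally closed'', which is not implied by $\mathrm{disc}(p)\in D^*$ (e.g.\ $D=\Z[\sqrt5]$, $p(X)=X(X-1)$, where $\mathrm{disc}(p)=1$ but $D(p)=\Int(\{0,1\},D)$ is not integrally closed); accordingly the corollary is to be read with $D$ integrally closed in force, as elsewhere in this section.
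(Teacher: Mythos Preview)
Your proposal is correct and follows essentially the same route as the paper: both apply Theorem~\ref{main thm} with $p_i(X)=X-\alpha_i$, note that $\mathrm{Res}(p_i,p_j)=\pm(\alpha_i-\alpha_j)$, and read off that conditions (i) and (iii) together amount to $\Delta(p)\in D^*$. Your write-up is in fact more careful than the paper's, which does not explicitly address condition~(ii); your observation that (ii) collapses to ``$D$ integrally closed'' in the split case, and hence that this hypothesis must be in force for the corollary to hold as stated, is a genuine clarification.
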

\noindent Notice that if the latter condition holds, in particular $p(X)$ is separable, that is, it has no repeated roots. We denote by $\Delta(p)$ the discriminant of $p(X)$.
\begin{proof} Let $\Omega_p=\{\alpha_1,\ldots,\alpha_n\}\subset D$ be the multi-set of roots of $p(X)$. By Theorem \ref{Intclopull} the integral closure of $D(p)$ is ${\rm Int}(\Omega_p,D)=\bigcap_i D(X-\alpha_i)$. 

It is enough to observe that $\Delta(p)=\prod_{i<j}(\alpha_i-\alpha_j)^2$ and that if $p_i(X)=X-\alpha_i$, for $i=1,\ldots,n$, then ${\rm Res}(p_i,p_j)=\pm(\alpha_j-\alpha_i)$. Then by Theorem \ref{main thm} we conclude. \end{proof}
\begin{Rem} 
The statement is false if we do not assume that $p(X)$ is split over $D$. For example, let $D=\Z$ and $p(X)=X^2-2$. Then $\Z(p)$ is integrally closed by Proposition \ref{intcloirr} (see also Theorem \ref{main thm}), because $\Z[\sqrt{2}]$ is the ring of integers $O_K$ of $K=\Q(\sqrt{2})$, so $\Z(p)={\rm Int}_{\Q}(\{\pm\sqrt{2}\},O_K)$.   However, $\Delta(p)=8$. This implies that the pullback $O_K(p)\subset K[X]$ is not integrally closed: the polynomial $f(X)=\frac{X-\sqrt{2}}{2\sqrt{2}}$ is in ${\rm Int}(\{\pm\sqrt{2}\},O_K)$ and not in $O_K(p)$, and by Theorem \ref{Intclopull} $f(X)$ is integral over $O_K(p)$.
\end{Rem}

\begin{Rem}
We can prove Theorem \ref{Intclopull} by means of a pullback diagram argument. 
By Lemma \ref{Dppullback} and Proposition \ref{Quotient IntKOmegap}, looking at the diagram (\ref{pullback diagram}), by \cite[Lemma 1.1.4 (8)]{FonHuckPap}, $\Int_K(\Omega_p,\olD)$ is the integral closure of $D(p)$, since $\prod_i D_{K_i}$ is the integral closure of $\frac{D[X]}{p(X)D[X]}$ in $\frac{K[X]}{p(X)K[X]}$. Indeed, it is known that $\frac{K[X]}{p(X)K[X]}$ is the total quotient ring of $\frac{D[X]}{p(X)D[X]}$ (see the proof of \cite[Theorem 10.15]{Nag}). Hence, by \cite[Proposition 2.7]{GilmBook}, $\frac{K[X]}{p(X)K[X]}$ is the total quotient ring of every subring containing $\frac{D[X]}{p(X)D[X]}$, and in particular of $\prod_i D[\alpha_i]$. By \cite[Proposition 9, Chapt. V]{Bourbaki}, $\prod_{i=1}^k D_{K_i}$ is the integral closure of $D$ in $\prod_{i=1}^k K[\alpha_i]$. Since each $\alpha_i$ is integral over $D$, it follows that $\prod_{i=1}^k D_{K_i}$ is the integral closure of $\prod_{i=1}^k D[\alpha_i]$ in $\prod_{i=1}^k K[\alpha_i]$. 
\end{Rem}
\vskip0.6cm

\section{General case of a finite set of integral elements over $D$}

We show in this section how to apply the previous results to the more general setting mentioned in the introduction, namely when the finite set of integral elements over $D$ is not necessarily contained in an algebraic extension of $K$. We recall the assumptions we mentioned in the introduction.

For simplicity, we assume that $D$ is integrally closed. Let $A$ be a $D$-algebra, possibly non-commutative and with zero-divisors, which is finitely generated and torsion-free as a $D$-module. Note that every element $a$ of $A$ is integral over $D$. Let $\mu_a(X)$ be the minimal polynomial of $a$ over $D$, which is not necessarily irreducible. To be precise, $\mu_a(X)$ is the monic generator of the ideal of $K[X]$ of those polynomials which are zero on $a$. Since $D$ is supposed integrally closed and $a$ is integral over $D$, $\mu_a\in D[X]$ (so that $\mu_a(X)$ is also the generator of the ideal of $D[X]$ of those polynomials which are zero at $a$). For short, we set $\Omega_a=\Omega_{\mu_a}$, the set of roots in $\olD$ of $\mu_a(X)$. We may evaluate polynomials of $K[X]$ at the elements of $A$ in the extended $K$-algebra $B=A\otimes_D K$ (note that, by assumption, $K$ and $A$ embed into $B$). Given a subset $S$ of $A$, we consider the ring of integer-valued polynomials over $S$:
$$\Int_K(S,A)=\{f\in K[X] \mid f(S)\subset A\}.$$
For $S=A$, we have the ring $\Int_K(A,A)=\Int_K(A)$ of integer-valued polynomials over $A$. For more details about this setting we refer to \cite{PerWer}. As in \cite{PerWer}, we consider polynomials over $K$ whose evaluation at the elements of $S$ are not necessarily in $A$, but are still integral over $D$. For this reason, we call them \emph{integral-valued} polynomials over $S$, since they preserve the integrality of the elements of $S$. We retain the notation introduced in \cite{PerWer}.
\begin{Def}
Let $K[S]$ be the $K$-subalgebra of $B=A\otimes_D K$ generated by $K$ and the elements of $S$. Let also $S'$ be the subset of $K[S]$ of those elements which are integral over $D$. We set
$$\Int_K(S,S')=\{f\in K[X] \mid f(S)\subset S'\}$$
which we call integral-valued polynomials over $S$.
\end{Def}
Note that in general $S'$ does not form a ring, if $A$ is non-commutative (even if $S$ is a ring; for example, consider the case $A=M_n(D)$). Nevertheless,  $\Int_K(S,S')$ does form a ring by the argument given in \cite[Proposition 6]{PerWer}: in order to show that $\Int_K(S,S')$ is closed under addition and multiplication, it is sufficient to consider what happens point-wise and use the fact that for each $s\in S$, $K[s]$ is a commutative $K$-algebra. 
We note that the ring $\Int_K(S,S')$ is equal to the ring of polynomials in $K[X]$ such that $f(s)$ (which a priori is in $K[s]\subseteq B$) is integral over $D$ for each $s\in S$. Clearly, $\Int_K(S,A)\subseteq\Int_K(S,S')$, because every element of $A$ is integral over $D$. The key result which links the ring of integral-valued polynomials $\Int_K(S,S')$ to a previous ring of integral-valued polynomials over a subset $\Omega$ of $\olD$ is the following.
\begin{Th}{\cite[Theorem 9]{PerWer}} Let $S$ be a subset of $A$ and set $\Omega_S=\bigcup_{s\in S}\Omega_s\subset\olD$. Then
$$\Int_K(S,S')=\Int_K(\Omega_S,\olD).$$
\end{Th}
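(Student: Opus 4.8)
The plan is to reduce the statement to a pointwise assertion and then analyze the finite-dimensional commutative $K$-algebra $K[s]$ for a single element $s\in S$. First, recall from the discussion preceding the statement that $\Int_K(S,S')$ is exactly the set of $f\in K[X]$ such that $f(s)$ is integral over $D$ for every $s\in S$; on the other side, since $\Omega_S=\bigcup_{s\in S}\Omega_s$, we have $\Int_K(\Omega_S,\olD)=\bigcap_{s\in S}\Int_K(\Omega_s,\olD)$. Hence it suffices to prove, for each fixed $s$, the equivalence: $f(s)$ is integral over $D$ if and only if $f(\alpha)\in\olD$ for every root $\alpha$ of $\mu_s$ in $\olK$.

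Next I would identify $K[s]$ with $K[X]/(\mu_s(X))$ via the evaluation homomorphism $X\mapsto s$, whose kernel is $(\mu_s)$ by the very definition of the minimal polynomial, so that $f(s)$ corresponds to the residue class $f\bmod\mu_s$. Factor $\mu_s(X)=\prod_i p_i(X)^{e_i}$ into powers of distinct monic irreducibles over $K$ (by Gauss's lemma for monic polynomials over the integrally closed $D$, this is also the factorization over $D$), and apply the Chinese Remainder Theorem to obtain $K[X]/(\mu_s)\cong\prod_i K[X]/(p_i^{e_i})$, under which $f(s)$ corresponds to $(f\bmod p_i^{e_i})_i$. Since an element of a finite product of rings is integral over $D$ exactly when each component is, the problem reduces to deciding, for each $i$, whether $f\bmod p_i^{e_i}$ is integral over $D$ in $R_i\doteqdot K[X]/(p_i^{e_i})$.

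The technical heart is then the following local claim: an element $\gamma\in R_i$ is integral over $D$ if and only if its image in the residue field $R_i/\mathfrak m_i\cong K(\alpha_i)$ is integral over $D$, where $\alpha_i$ is a root of $p_i$ and $\mathfrak m_i=(p_i)/(p_i^{e_i})$ is the maximal ideal, nilpotent of index $\le e_i$. The forward direction is immediate by reducing a monic equation for $\gamma$ modulo $\mathfrak m_i$. For the converse, if $h\in D[T]$ is monic and kills the residue of $\gamma$, then $h(\gamma)\in\mathfrak m_i$ is nilpotent with $h(\gamma)^{e_i}=0$, so $\gamma$ satisfies the monic polynomial $h(T)^{e_i}\in D[T]$. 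Applying this with $\gamma=f\bmod p_i^{e_i}$, whose residue in $K(\alpha_i)$ is $f(\alpha_i)$, we get that $f(s)$ is integral over $D$ iff $f(\alpha_i)$ is integral over $D$ for every $i$; and since the roots of each $p_i$ are the $K$-conjugates of $\alpha_i$ and $\olD$ is stable under $K$-embeddings, this is equivalent to $f(\alpha)\in\olD$ for all $\alpha\in\Omega_s$. Intersecting over all $s\in S$ yields $\Int_K(S,S')=\Int_K(\Omega_S,\olD)$.

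The main obstacle I anticipate is precisely this local claim in the third step: the algebra $K[s]$ need not be reduced — this happens exactly when $\mu_s$ fails to be square-free, which is common (e.g.\ for many matrices in $M_n(D)$ when $A=M_n(D)$) — so one cannot simply write $K[s]$ as a product of fields and take the evident integral closure componentwise. The nilpotence argument above is what neutralizes the non-reduced components; the remaining ingredients (behavior of integrality over products of rings and Galois conjugacy of roots) are routine.
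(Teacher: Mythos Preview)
Your proof is correct, but it takes a genuinely different route from the paper's own argument. The paper embeds $B$ into $\textnormal{End}_K(B)$ via left multiplication, interprets $\Omega_s$ as the set of eigenvalues of $s$ viewed as a $K$-endomorphism, and then invokes the linear-algebra fact (Bourbaki, Alg.~VII, \S5, Prop.~10) that the eigenvalues of $f(s)$ are exactly $f(\Omega_s)$; the equivalence ``$f(s)$ integral over $D$ $\Leftrightarrow$ $f(\Omega_s)\subset\olD$'' then follows because the minimal polynomial of $f(s)$ has coefficients in $K$ that are symmetric functions of these eigenvalues, hence lie in $K\cap\olD=D$ when all eigenvalues are integral. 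By contrast, you bypass the endomorphism picture entirely: you work directly with the Artinian $K$-algebra $K[s]\cong K[X]/(\mu_s)$, decompose it via the Chinese Remainder Theorem into local pieces $K[X]/(p_i^{e_i})$, and handle the non-reduced components with a clean nilpotence trick ($h(\gamma)\in\mathfrak m_i\Rightarrow h(\gamma)^{e_i}=0$). Your argument is more self-contained---it does not appeal to an external result on eigenvalues of polynomial maps---and makes transparent exactly where the possible non-separability of $\mu_s$ enters and how it is neutralized. The paper's argument is shorter and more conceptual once the Bourbaki result is granted, and it has the advantage of giving the stronger identity $\Omega_{f(s)}=f(\Omega_s)$ directly, which your approach recovers only implicitly.
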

\begin{proof}
For the sake of the reader we give the proof. Since $1\in D\subset B$, we may embed $B$ into the endomorphism ring $\textnormal{End}_K(B)$, via the map given by  multiplication on the left by $b\in B$. In particular,  $A$ is a sub-$D$-algebra of $\textnormal{End}_K(B)$, and for $s\in S$, $\Omega_s$ is the set of eigenvalues (in $\olK$) of $s$ considered as a $K$-endomorphism of $B$.
Since $A$ is finitely generated as a $D$-module, by \cite[Chapt. VII, \S. 5, Proposition 10]{BourbakiAlg}, for any polynomial $f\in K[X]$, $f(\Omega_s)=\{f(\alpha) \mid \alpha\in\Omega_s\}$ is the set of eigenvalues of $f(s)$, so that in our notation $f(\Omega_s)=\Omega_{f(s)}$. Given $f\in K[X]$ and $s\in S$, $f(s)$ is integral over $D$ if and only if the elements of $\Omega_{f(s)}=f(\Omega_s)$ are integral over $D$ (because $D$ is integrally closed). The claim is then proved.
\end{proof}
\vskip0.4cm
We are ready to give the proof of the last main result of the paper, see Corollary \ref{3rd result} of the Introduction.
\begin{Cor}
Let $S$ be a finite subset of $A$ and $\Omega_S=\bigcup_{s\in S}\Omega_s\subset\olD$. Then the integral closure of $\Int_K(S,A)$ is $\Int_K(\Omega_S,\olD)$.
\end{Cor}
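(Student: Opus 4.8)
The plan is to realize $\Int_K(\Omega_S,\olD)$ as the integral closure of $\Int_K(S,A)$ by squeezing a suitable polynomial pullback $D(p)$ between the two rings and then invoking Theorem \ref{Intclopull}.

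First I would dispose of the two easy ingredients. Since $S$ is finite, $\Omega_S=\bigcup_{s\in S}\Omega_s$ is a finite subset of $\olD$, so $\Int_K(\Omega_S,\olD)$ is integrally closed by Remark \ref{IntKOmegaD integ closed}. Moreover every element of $A$ is integral over $D$, so $\Int_K(S,A)\subseteq\Int_K(S,S')$, and by the immediately preceding theorem (\cite[Theorem 9]{PerWer}) one has $\Int_K(S,S')=\Int_K(\Omega_S,\olD)$; hence $\Int_K(S,A)\subseteq\Int_K(\Omega_S,\olD)$. It therefore remains only to show that every $f\in\Int_K(\Omega_S,\olD)$ is integral over $\Int_K(S,A)$.

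For this I would set $p(X)=\prod_{s\in S}\mu_s(X)$, a monic non-constant polynomial in $D[X]$ (recall $\mu_s\in D[X]$ because $D$ is integrally closed). Two observations do the job. (a) The underlying set of the multi-set $\Omega_p$ of roots of $p$ is exactly $\Omega_S$, so $\Int_K(\Omega_p,\olD)=\Int_K(\Omega_S,\olD)$, as noted right after Theorem \ref{Intclopull}. (b) $D(p)\subseteq\Int_K(S,A)$: fix $s\in S$; since $\mu_s\mid p$, Lemma \ref{DpDq} gives $D(p)\subseteq D(\mu_s)$, and if $g\in D(\mu_s)$ then by Lemma \ref{Dppullback} the remainder $r(X)$ of $g$ modulo $\mu_s$ lies in $D[X]$, whence $g(s)=r(s)\in D[s]\subseteq A$, the evaluation being taken inside the commutative $K$-algebra $K[s]\subseteq B$; thus $D(\mu_s)\subseteq\Int_K(\{s\},A)$, and intersecting over $s\in S$ yields $D(p)\subseteq\Int_K(S,A)$. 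Now Theorem \ref{Intclopull} says $\Int_K(\Omega_p,\olD)$ is the integral closure of $D(p)$, so every $f\in\Int_K(\Omega_S,\olD)=\Int_K(\Omega_p,\olD)$ is integral over $D(p)$, hence over the larger ring $\Int_K(S,A)$. Combining with the previous paragraph, $\Int_K(\Omega_S,\olD)$ is integrally closed, contains $\Int_K(S,A)$, and is integral over it, i.e. it is the integral closure of $\Int_K(S,A)$.

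I do not expect a genuine obstacle: once Theorem \ref{Intclopull} is in hand, the only work is the inclusion $D(p)\subseteq\Int_K(S,A)$, and the only subtlety there is bookkeeping with the multi-set $\Omega_p$ versus its underlying set together with the fact that in a possibly non-commutative $A$ one evaluates polynomials of $K[X]$ inside the commutative subalgebra $K[s]$ — both already built into the setup preceding the statement. An equivalent route, avoiding Lemma \ref{DpDq}, is to argue directly with the monic polynomial $P(X)=\prod_{\alpha\in\Omega_p}(X-f(\alpha))\in D[X]$ from Lemma \ref{pPp}: since $\mu_s\mid p\mid P(f(X))$ in $K[X]$ for each $s\in S$, one gets $P(f(s))=0\in A$, so $P(f(X))\in\Int_K(S,A)$, and this exhibits a monic equation of degree $\deg p$ for $f$ over $\Int_K(S,A)$.
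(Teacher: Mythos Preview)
Your proof is correct and follows essentially the same route as the paper: set $p(X)=\prod_{s\in S}\mu_s(X)$, establish the chain $D(p)\subseteq\Int_K(S,A)\subseteq\Int_K(S,S')=\Int_K(\Omega_S,\olD)$, and invoke Theorem \ref{Intclopull}. The paper's proof is a terse three lines that asserts the chain ``by above''; you have simply written out the justification for $D(p)\subseteq\Int_K(S,A)$ explicitly via Lemmas \ref{DpDq} and \ref{Dppullback}, and spelled out the easy upper inclusion and the integral-closedness of the target ring. Your alternative route through Lemma \ref{pPp} is also valid and is in the same spirit.
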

\begin{proof}
Let $p(X)=\prod_{s\in S}\mu_s(X)\in D[X]$. By above, we have the following inclusions:
$$D(p)\subseteq\Int_K(S,A)\subseteq\Int_K(S,S')=\Int_K(\Omega_S,\olD)$$
and the claim follows by Theorem \ref{Intclopull}. 
\end{proof}

Note that by Theorem \ref{Prufer condition}, the ring  $\Int_K(S,A)$ has Pr\"ufer integral closure if and only if $D$ is Pr\"ufer.

\vskip1cm
\noindent {\bf Acknowledgements}.\\
We wish to thank the referee for his/her valuable suggestions which improved the quality of the paper. The author was supported by the Austrian Science Foundation (FWF), Project Number P23245-N18, INDAM and also University of Roma 3. The paper was prepared during a visit at the Department of Mathematics and Physics of the University of Roma 3. The author wishes to thanks Francesca Tartarone for the hospitality.

\vskip1cm
\addcontentsline{toc}{section}{Bibliography}

\end{document}